\def\R{{\mathbb R}}
\def\C{{\mathbb C}}
\def\D{{\mathbb D}}
\DeclareMathOperator{\ran}{ran}
\DeclareMathOperator{\Tr}{Tr}
\DeclareMathOperator{\pv}{p.v.}
\newtheorem{theorem}{Theorem}[section]
\newtheorem{proposition}[theorem]{Proposition}
\newtheorem{corollary}[theorem]{Corollary}
\newtheorem{lemma}[theorem]{Lemma}
\newtheorem{remark}{Remark}[section]
\begin{document}

\title[Spectral bounds for the Neumann-Poincar\'{e} operator]{Spectral bounds for the Neumann-Poincar\'{e} operator on planar domains
with corners}

\author{Karl-Mikael Perfekt}
\address{Centre for Mathematical Sciences, Lund University, 
P.O. Box 118, SE-221 00 Lund, Sweden}
\email{perfekt@maths.lth.se}

\author{Mihai Putinar}
\address{Department of Mathematics, University of California at Santa Barbara,
Santa Barbara, CA 93106-3080}
\email{mputinar@math.ucsb.edu}

\date{\today}

\begin{abstract} 
The boundary double layer potential, or the Neumann-Poincar\'e operator,
is studied on the Sobolev space of order $1/2$ along the boundary,
coinciding with the space of charges giving rise to double layer
potentials with finite energy in the whole space. Poincar\'e's program of
studying the spectrum of the boundary double layer potential is developed
in complete generality, on closed Lipschitz hypersurfaces in Euclidean
space. Furthermore, the Neumann-Poincar\'e operator is realized as a
singular integral transform bearing similarities to the Beurling-Ahlfors
transform in 2D. As an application, bounds for the spectrum of the
Neumann-Poincar\'e operator are derived from recent results in
quasi-conformal mapping theory, in the case of planar curves with corners. 

\smallskip
\noindent \textbf{Keywords.} Neumann-Poincar\'e operator, double layer potential, spectrum, Lipschitz domain, conformal mapping, Beurling-Ahlfors transform, Sobolev space
\end{abstract}
\maketitle

\section{Introduction}  The boundary value problems of potential theory in $\R^n$ are naturally cast as singular integral equations with unknown solutions supported by the boundary. When the boundary is singular, with finite vertex type corners as a mild example or it is a general Lipschitz hypersurface as an extreme example,
notorious analytic complications arise. They were solved with ingenuity and perseverance during a century and a half of continuous struggle by many distinguished mathematicians and physicists. An early account of the fascinating history of
this specific topic of potential theory is offered by the encyclopedia article by Lichtenstein \cite{Licht}.

Singular integrals related to layer potentials are thoroughly studied nowadays, first due to the central role they still hold and the source of inspiration they are 
in modern mathematical analysis, and second due to an array of new applications
to continuum mechanics, field theory and engineering \cite{Ammari}. In this respect, the importance of the spectral analysis of the layer potential integral operators cannot be underestimated. The good news for the working mathematician is that much remains to be done on this front. For instance, the ubiquitous double layer potential integral associated with the Laplacian, also known as the Neumann-Poincar\'e operator, offers a very intriguing spectral picture. To be more specific, let $\Gamma$ be a piecewise smooth planar Jordan curve with finitely many corners, endowed with arclength measure $d\sigma$ and let $K$ denote the Neumann-Poincar\'e operator (see the preliminaries below for the precise definition); recent results of Irina Mitrea \cite{Mitrea} imply a non-real spectrum of $K$ on the Lebesgue spaces $L^p(\Gamma, d\sigma), 1 < p < \infty$, containing closed lemniscate domains, one for each vertex of $\Gamma$. On the other hand,
the spectrum of $K$ on the Sobolev space $H^{1/2}(\Gamma)$, the natural home of charges of double layer potentials
with finite energy, is real, and contained in the interval $(-1,1]$. A notable early contribution goes back to Carleman
\cite{Carleman} who studied in great detail the resolvent of Neumann-Poincar\'e's operator acting on the space of piecewise continuous bounded functions on $\Gamma$. Based on Poincar\'e's original line of thought (see \cite{Khav91} for a modern account of it), one can safely state that considering the action of $K$ on the space $H^{1/2}(\Gamma)$ stands aside as the most natural choice. 

Poincar\'e's program, of estimating the spectrum of $K: H^{1/2}(\Gamma) \longrightarrow H^{1/2}(\Gamma)$
via a Rayleigh quotient (balance) of outer and inner energies of the single layer fields generated by charges
$\rho \in H^{-1/2}(\Gamma)$, can be carried out in great generality. Specifically
we will see its validity on closed Lipschitz hypersurfaces
of $\R^n, n \geq 2$, in Section 3 below. Nonetheless, precise bounds for the spectrum of this operator are very scarce, see for instance \cite{Hels12}.
Two real dimensions are however special, due to the large group of conformal mappings. A groundbreaking observation
due to Ahlfors \cite{Ahl} connects the spectrum of the Neumann-Poincar\'e operator (acting on $H^{1/2}(\Gamma))$ to distortion estimates of quasi-conformal transforms, originally mapping the interior of $\Gamma$
onto the exterior domain. A great deal of work has accumulated on the quasi-conformal estimates \cite{Krus09}, and as incomplete these results may be, they provide the most valuable source of bounds for the spectrum of the Neumann-Poincar\'e operator in 2D. A second, and related, direction of research was advocated in 2D by M. Schiffer, who has obstinately returned during his career to the spatial interpretation of the Neumann-Poincar\'e operator as a Beurling-Ahlfors transform acting on the Bergman space of the inner domain. Naturally, function theory of a complex variable plays a central role in Schiffer's context, see for instance \cite{BS}.

The contents is the following. Section \ref{sec:prelim} develops Poincar\'e's framework of inner and outer harmonic fields, in the case of Lipschitz hypersurfaces $\Gamma$ in $\R^n$. The development is a matter of finding the proper spaces and
formulations of results known for smoother varieties, and hence Section \ref{sec:prelim} is one of technical preliminaries. 

Section \ref{sec:angle} generalizes results of \cite{Khav91} to the Lipschitz boundary case. First by using Plemelj's intertwining formula as the main ingredient in the similarity between the adjoint
$K^*$ of the Neumann-Poincar\'e operator and two different bounded,
self-adjoint operators. These self-adjoint operators are in turn interpreted as angle operators between two canonical orthogonal decompositions of the space of harmonic fields of finite energy. In this way, Schiffer's and Poincar\'e's ideas carry to any number of dimensions in the Lipschitz setting. A simple singular integral operator plays the role of the Beurling-Ahlfors transform, and Poincar\'e's balance of energies is given a direct link to a self-adjoint angle operator. To give a flavor of the results, we state our theorem on the Beurling-Ahlfors transform in 2D.

{
\renewcommand{\thetheorem}{\ref{thm:beurling}}
\begin{theorem}
Let $\Omega \subset \R^2$ be an open and bounded Lipschitz domain with connected boundary and let $T_\Omega : L^2_a(\Omega) \to \overline{L^2_a(\Omega)}$ denote the operator
\begin{equation*}
T_\Omega f (z) = \pv \frac{1}{\pi} \int_\Omega \frac{f(\zeta)}{(\bar{\zeta}-\bar{z})^2} \, dA(\zeta), \quad f\in L^2_a(\Omega), \, z \in \Omega.
\end{equation*}
Then $K^* : H^{-1/2}_0(\partial \Omega) \to H^{-1/2}_0(\partial \Omega)$ is similar to $\overline{T_\Omega} :  L^2_a(\Omega) \to L^2_a(\Omega)$, when the spaces are considered over the field of reals. Here $\overline{T_\Omega}f(z) = \overline{T_\Omega f(z)}$.
\end{theorem}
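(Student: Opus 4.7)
The plan is to transport $K^*$ from charges on $\partial\Omega$ to holomorphic functions on $\Omega$ by differentiating the single-layer potential. Given $\rho \in H^{-1/2}_0(\partial\Omega)$, the single-layer potential $S\rho$ is harmonic in $\Omega$ and in $\R^2\setminus\overline{\Omega}$, continuous across $\partial\Omega$, and has finite global Dirichlet energy. In two real dimensions one sets
\[
\Phi\rho := \partial S\rho\big|_{\Omega},
\]
which is holomorphic on $\Omega$ and satisfies $\|\Phi\rho\|_{L^2_a(\Omega)}^2 = \tfrac14 \int_\Omega |\nabla S\rho|^2\, dA$, so that $\Phi\rho \in L^2_a(\Omega)$ with norm equivalent, over $\R$, to the interior energy of $S\rho$. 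The first step is to verify, using the framework of Section \ref{sec:prelim}, that $\Phi : H^{-1/2}_0(\partial\Omega) \to L^2_a(\Omega)$ is an $\R$-linear bounded bijection with bounded inverse: injectivity follows by recovering $\rho$ as the jump of the normal derivative of $S\rho$, and surjectivity follows because any $f \in L^2_a(\Omega)$ admits a primitive $U$ whose real part extends to a globally finite-energy harmonic function realized as some $S\rho$. Here one crucially uses that $\partial\Omega$ is connected, so that $H^{-1/2}_0(\partial\Omega)$ exactly matches the quotient by additive constants that $\partial$ cannot see.

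The task then reduces to computing $\Phi K^* \Phi^{-1}$ and recognizing it as $\overline{T_\Omega}$. By the angle-operator realization from Section \ref{sec:angle}, $K^*$ is similar to a self-adjoint operator comparing the interior/exterior decomposition of finite-energy harmonic gradient fields with a decomposition arising from the Cauchy--Riemann splitting. Pulled over to $L^2_a(\Omega)$ via $\Phi$, this comparison is computed by passing from the Cauchy transform of $\rho$ to its $z$-derivative: the identity
\[
\partial_z \int_\Omega \frac{f(\zeta)}{\bar\zeta-\bar z}\,dA(\zeta) \;=\; \pv \int_\Omega \frac{f(\zeta)}{(\bar\zeta-\bar z)^2}\,dA(\zeta),
\]
applied to $f=\Phi\rho$ via the Cauchy--Pompeiu representation of Bergman functions on $\Omega$, produces exactly the kernel of $T_\Omega$. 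Because this kernel is antiholomorphic in $z$, the integral of a holomorphic $f$ lies in $\overline{L^2_a(\Omega)}$; the passage from $T_\Omega$ to $\overline{T_\Omega}$ then encodes the fact that the Cauchy--Riemann splitting interchanges the roles of interior and exterior holomorphic pieces under one complex conjugation, matching the angle operator of Section \ref{sec:angle} precisely.

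The main obstacle is the interplay between the Lipschitz regularity of $\partial\Omega$ and the $\R$-linear bookkeeping. In the absence of smoothness of $\partial\Omega$, the identification between the $H^{-1/2}$ boundary trace of $\partial S\rho$ and the boundary data implicit in the Cauchy--Pompeiu representation must be carried out through the $L^2$-boundedness of the Beurling transform on $\Omega$ together with the Plemelj jump machinery already developed in Section \ref{sec:prelim}; this is the delicate technical point, since the principal value defining $T_\Omega$ interacts with the boundary trace of $\Phi\rho$ in a way that is transparent only on smooth curves. Equally subtle is the fact that both $H^{-1/2}_0(\partial\Omega)$ and $L^2_a(\Omega)$ carry complex structures which $\Phi$ does \emph{not} intertwine; this forces the similarity to be stated over the reals and is the structural reason why the target operator is $\overline{T_\Omega}$ rather than $T_\Omega$ itself.
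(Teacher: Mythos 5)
Your strategy---transporting $K^*$ to the Bergman space by the Schiffer-type map $\Phi\rho=\partial S\rho|_\Omega$---is genuinely different from the paper's proof, which never uses this map: there $K^*$ is conjugated by $\sqrt{J}$, $J=2(I-K)^{-1}S$, identified with the angle operator $P_i(P_d-P_s)P_i$ acting on fields $h=Df+Sg$ that vanish identically in $\Omega_e$, and only then carried to $\overline{T_\Omega}$ via $h_i\mapsto 2\bar\partial h_i$. Your route can be made to work and would give a more direct similarity, but as written it has genuine gaps. The decisive step, that $\Phi K^*\Phi^{-1}$ equals $\overline{T_\Omega}$, is never actually computed: invoking ``the angle-operator realization from Section \ref{sec:angle}'' is circular here, since that realization specialized to two dimensions \emph{is} the statement being proved, and the one identity you display is false as stated --- the kernel $(\bar\zeta-\bar z)^{-1}$ is anti-holomorphic in $z$, so $\partial_z\int_\Omega f(\zeta)(\bar\zeta-\bar z)^{-1}dA(\zeta)=-\pi f(z)$ a.e.\ in $\Omega$, not the principal-value integral; the correct derivative is $\partial_{\bar z}$, and even then no intertwining with $K^*$ has been derived. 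Also, injectivity of $\Phi$ does not follow from ``recovering $\rho$ as the jump of the normal derivative'': if $\partial S\rho\equiv 0$ in $\Omega$ you only learn that $\partial_n^{\mathrm{int}}S\rho=0$, i.e.\ $K^*\rho=\rho$; to conclude $\rho=0$ you must use $\ker(I-K^*)=\R g_0$ (equivalently the invertibility of $S$, which in 2D requires the paper's normal-domain normalization) together with $\langle g_0,1\rangle_{L^2(\partial\Omega)}\neq 0$ and $\rho\perp 1$. Finally, the Lipschitz-boundary justification of the Cauchy--Pompeiu manipulations is flagged but not supplied; it requires a density argument (smooth charges, or approximating domains) of the kind the paper itself carries out in Lemma \ref{lem:essequiv}.

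The missing core computation is short and worth recording, since it shows your plan is viable. Write $F=\partial S\rho|_\Omega$, $G=\partial S\rho|_{\Omega_e}$ (both holomorphic, and $G=O(|\zeta|^{-2})$ at infinity because $\rho\perp 1$). Along $\partial\Omega$ one has $2F\,d\zeta=d(S\rho)+i\,\partial_n^{\mathrm{int}}S\rho\,ds$ and $2G\,d\zeta=d(S\rho)+i\,\partial_n^{\mathrm{ext}}S\rho\,ds$, so the jump formulae give $2F\,d\zeta-\overline{2G\,d\zeta}=-i\,K^*\rho\,ds$. By Stokes' theorem, exactly as in Lemma \ref{lem:essequiv}, $T_\Omega F(z)=\frac{i}{2\pi}\int_{\partial\Omega}\frac{F(\zeta)\,d\zeta}{\bar\zeta-\bar z}$ for $z\in\Omega$; substituting the boundary identity, the term containing $\overline{G\,d\zeta}$ is the conjugate of $\frac{-i}{4\pi}\int_{\partial\Omega}\frac{2G(\zeta)\,d\zeta}{\zeta-z}$, which vanishes by Cauchy's theorem applied in $\Omega_e$, while the remaining term equals $\frac{1}{4\pi}\int_{\partial\Omega}\frac{K^*\rho(\zeta)}{\bar\zeta-\bar z}\,ds(\zeta)=\bar\partial S(K^*\rho)(z)$. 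Hence $T_\Omega\Phi\rho=\overline{\Phi K^*\rho}$, that is $\overline{T_\Omega}\Phi=\Phi K^*$. With this identity proved for a dense class and extended by continuity, with a correct injectivity/surjectivity argument for $\Phi$ (bounded inverse then being automatic from the open mapping theorem), your alternative proof closes; without it, the proposal only describes the expected conclusion.
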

\addtocounter{theorem}{-1}
} 

In Section 4 we exploit Theorem \ref{thm:beurling} and recent results in quasi-conformal mapping theory to obtain bounds of the spectrum of the operator $K$ in 2D, in the case of domains with corners. An important point to be made is that SOT-type convergence arguments for the singular operator $T_\Omega$ are exceedingly approachable, for example when approximating a domain $\Omega$ with a sequence of domains $\Omega_n$. To exemplify, we give a simple and novel proof of K\"uhnau's angle inequality; if a domain $\Omega$ has a corner of angle $\theta_j$, then the spectral radius of $\overline{T_\Omega}$ satisfies $|\sigma(\overline{T_\Omega})| \geq |1 - \theta_j/\pi|$, see Theorem \ref{thm:kuhnau}. 

In contrast with the spectral properties of $K : L^p \to L^p$ \cite{Mitrea}, any non-essential point in the spectrum of $K : H^{1/2} \to H^{1/2}$ is an isolated eigenvalue of finite multiplicity (see Corollary \ref{cor:spectrum}), owing to the special symmetry features that K exhibits on $H^{1/2}$. In modern computational applications involving $K$, or more general double layer potential operators such as those associated with the Helmholtz equation, such points in the discrete spectrum are easy to recognize. Furthermore, the resolvent can be controlled rather well in their vicinity, especially in comparison to points where there is a continuous contribution to the spectral picture. It is therefore of great interest to obtain information about the essential spectrum of the Neumann-Poincar\'e operator. See for example \cite{Brem12}, \cite{Hels12}. 

To state our main theorem, suppose that $\Omega$ is a curvilinear polygonal domain with interior angles smaller than $\pi$ such that its sum of exterior angles is less than $2\pi$, with one angle regarded as negative. Then by the explicit construction of a conformal mapping from $\Omega$ onto a domain for which the spectral radius of $\overline{T_\Omega}$ is known, we prove the following result on the essential spectrum of the boundary double layer potential.

{
\renewcommand{\thetheorem}{\ref{thm:essbound}}
\begin{theorem} 
Let $\Omega$ be a $C^{1,\alpha}$-smooth curvilinear polygon with $0 < \theta_j < \pi$ for $1 \leq j \leq N$ such that its angles satisfy $$\sum_{j=1}^{N-1}(\pi-\theta_j) + \pi + \theta_N \leq 2\pi,$$
possibly after a cyclic permutation of the vertex labels. Then
\begin{equation*}
|\sigma_\textrm{ess}(K)| = |\sigma_\textrm{ess} (\overline{T_\Omega})| \leq \max_{1 \leq j \leq N} \left( 1-\theta_j/\pi \right),
\end{equation*}
where the Neumann-Poincar\'{e} operator $K$ is acting on $H^{1/2}(\partial \Omega)$.
\end{theorem}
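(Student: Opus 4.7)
The identity $|\sigma_{\textrm{ess}}(K)| = |\sigma_{\textrm{ess}}(\overline{T_\Omega})|$ is immediate from Theorem \ref{thm:beurling}, since similar bounded operators share their essential spectrum, and the task reduces to proving the upper bound
$$|\sigma_{\textrm{ess}}(\overline{T_\Omega})| \leq \max_{1 \leq j \leq N}\bigl(1 - \theta_j/\pi\bigr).$$
The plan is to transport the problem to a model domain $\widetilde\Omega$ via an explicit conformal map $\Phi : \Omega \to \widetilde\Omega$ and to read off the bound on $\widetilde\Omega$ from known quasi-conformal estimates.

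The first step is a corner-localization principle: the essential spectrum of $K_\Omega$ on $H^{1/2}(\partial\Omega)$ is determined by the corner data of $\partial\Omega$ alone. A smooth partition of unity concentrated at the vertices splits $K_\Omega$ into a finite sum of local pieces plus an operator supported on the $C^{1,\alpha}$-smooth arcs, and the latter is compact by the standard Plemelj calculus on smooth arcs. Consequently, for any conformal map $\Phi : \Omega \to \widetilde\Omega$ sending each corner of $\Omega$ to a corner of $\widetilde\Omega$ of the same opening angle, one has $\sigma_{\textrm{ess}}(K_\Omega) = \sigma_{\textrm{ess}}(K_{\widetilde\Omega})$; invoking Theorem \ref{thm:beurling} on both sides, the same equality holds for $\overline{T_\Omega}$ and $\overline{T_{\widetilde\Omega}}$.

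Next, construct $\widetilde\Omega$ explicitly. The hypothesis rewrites as
$$\sum_{j=1}^{N-1}(\pi - \theta_j) \leq \pi - \theta_N,$$
which is precisely the total-turning budget for an unbounded straight-sided polygonal region carrying $N-1$ finite vertices of interior angles $\theta_1,\dots,\theta_{N-1}$ and a single vertex at infinity of asymptotic opening $\theta_N$ (so this vertex contributes $\pi + \theta_N$ in place of $\pi - \theta_N$, the meaning of ``one angle regarded as negative''). A Schwarz--Christoffel formula produces such a $\widetilde\Omega$, and Kellogg-type boundary regularity for the Riemann map yields the desired corner-matching $\Phi : \Omega \to \widetilde\Omega$.

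It remains to estimate $|\sigma(\overline{T_{\widetilde\Omega}})|$. Because $\widetilde\Omega$ is an unbounded rectilinear polygon with prescribed angles, one can write down an explicit quasi-conformal reflection across $\partial\widetilde\Omega$ whose Beltrami coefficient has essential supremum $\max_j(1 - \theta_j/\pi)$; Ahlfors' theorem identifying quasi-conformal dilatation with spectral bounds for the Beurling--Ahlfors transform then gives the upper bound, which together with the matching lower bound of Theorem \ref{thm:kuhnau} in fact pins down $|\sigma(\overline{T_{\widetilde\Omega}})| = \max_j(1-\theta_j/\pi)$. The main obstacle is the localization step: $\overline{T_\Omega}$ does not conjugate cleanly under conformal changes of the domain, because the anti-holomorphic kernel $(\bar\zeta - \bar z)^{-2}$ is not conformally covariant. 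Routing the localization through the boundary operator $K$, which transforms well under conformal homeomorphisms of Jordan domains, and only then passing back to the Bergman side via Theorem \ref{thm:beurling}, is how the plan resolves this asymmetry.
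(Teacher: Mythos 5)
Your high-level plan (transport $\Omega$ conformally onto an unbounded polygonal model with the same angles, then quote known quasiconformal/Fredholm-eigenvalue results there) matches the paper, but the two steps that carry all the weight are asserted rather than proved, and one of them rests on a false premise. First, the ``corner-localization principle'' for $K$ on $H^{1/2}(\partial\Omega)$: you claim that a partition of unity plus ``standard Plemelj calculus'' reduces matters to corner data, and that $K$ ``transforms well under conformal homeomorphisms of Jordan domains.'' The Neumann--Poincar\'e kernel is invariant only under M\"obius maps, not under general conformal maps, so the invariance you lean on at the end is exactly what has to be established, and establishing it modulo compacts requires (i) boundary regularity of the corner-matching map $\Phi$ up to and including the vertices, and (ii) a compactness argument valid on $H^{1/2}$, not merely $L^2$. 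The paper resolves this in the opposite direction from your proposal: it never localizes $K$ at all, but conjugates on the Bergman side by the unitary $Uf=\phi'\,(f\circ\phi)$ and shows (Lemma \ref{lem:essequiv}) that $\overline{U}T_{\Omega_2}-T_{\Omega_1}U$ has a weakly singular kernel, hence is compact, so $\sigma_{\mathrm{ess}}(\overline{T_{\Omega}})$ is a conformal ``corner invariant.'' That lemma in turn needs $\phi\in C^{1,\alpha}(\overline{\Omega})$ with nonvanishing derivative, which is not ``Kellogg-type regularity'' off the shelf: at a vertex the Riemann map is not $C^1$, and the paper's Lemma \ref{lem:cornerlemma} must straighten each corner by a power map and exploit the cancellation of the singular exponents $\pi/\theta_j$ and $\theta_j/\pi$ coming from the two sides, precisely because the angles match. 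Your proposal skips both of these points.

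Second, the evaluation on the model domain: you assert that one can ``write down'' a quasiconformal reflection across the unbounded polygon with Beltrami coefficient of modulus $\max_j(1-\theta_j/\pi)$ and then invoke Ahlfors. Such a reflection exists locally near each wedge, but producing a global reflection that achieves the maximum of the local dilatations is the hard half of the known results; it is exactly the content of Krushkal's theorem (Theorem \ref{thm:krushkal}), which the paper cites and which requires the unbounded model domain to be \emph{convex}. This is where the angle hypothesis $\sum_{j=1}^{N-1}(\pi-\theta_j)+\pi+\theta_N\leq 2\pi$ actually enters the paper: the Schwarz--Christoffel-type map $\psi_2$ is shown to be univalent with convex image by verifying $\Re\left(1+w\,\psi_2''(w)/\psi_2'(w)\right)>0$, using $\Re\frac{w}{w-z}<\tfrac12$ for $z\in\partial\D$. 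In your write-up the hypothesis is only used as a ``turning budget'' for the existence of the polygon, convexity is never checked, and no hypothesis of any citable theorem is verified; as it stands the claimed value of $|\sigma(\overline{T_{\widetilde\Omega}})|$ is unsupported. (The lower bound via Theorem \ref{thm:kuhnau} is not needed for the stated inequality in any case.) To repair the proposal, replace the $K$-side localization by the Bergman-side compact-perturbation lemma together with the corner regularity of $\Phi$, and replace the explicit-reflection claim by verifying convexity of the Schwarz--Christoffel image so that Krushkal's theorem applies.
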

\addtocounter{theorem}{-1}
}

\section{Preliminaries} \label{sec:prelim}
We begin by briefly introducing the Sobolev-Besov spaces and Lipschitz-Hölder spaces that are necessary for our presentation. Let $U \subset \R^n$, $n \geq 2$, be an open Lipschitz domain with connected boundary. For a precise definition of a Lipschitz domain, see for example \cite{Verc84}. $H^1(U)$ denotes the Hilbert space of functions $V \in L^2(U)$ such that
\begin{equation*}
\|V\|_{H^1(U)}^2 = \|V\|_{L^2(U)}^2 + \| \nabla V \|_{L^2(U)}^2 < \infty. 
\end{equation*}
$H^1(\partial U)$ may be instrinsically defined in a similar way, using the Lipschitz manifold structure of $\partial U$. For $0 < s < 1$, we obtain $H^s(\partial U)$ on the real interpolation scale between $L^2(\partial U)$ and $H^1(\partial U)$. Although it will not be used, we note that $H^s(\partial U)$ is a Besov space,
\begin{equation*}
\| v \|_{H^s(\partial U)}^2 \sim \| v \|_{L^2(\partial U)}^2 + \int_{\partial U \times \partial U} \frac{|v(x)-v(y)|^2}{|x-y|^{n-1+2s}} \, d \sigma(x) \, d \sigma(y), 
\end{equation*}
where $\sigma$ denotes $(n-1)$-dimensional Hausdorff measure on $\partial U$. We define $H^{-s}(\partial U)$, $0 \leq s \leq 1$, as the dual of $H^s(\partial U)$ under the (sesquilinear) $L^2$-pairing, and by abuse of notation we always write
\begin{equation*}
g(f) = \langle f, g \rangle_{L^2(\partial U)}.
\end{equation*}
Whenever $1 \in H^{-s}(\partial U)$, $H_0^s(\partial U)$ denotes its annihilator,\begin{equation*}
H_0^s(\partial U) = \{f \in H^s(\partial U) \, : \, \langle f, 1 \rangle_{L^2(\partial U)} = 0 \}, \quad -1 \leq s \leq 1.
\end{equation*}
 For $d \geq 0$ an integer and $0 < \alpha < 1$, we denote by $C^{d,\alpha}(\overline{U})$ the space of d times continuously differentiable functions $\phi : U \to \C$, such that $\partial^{\beta} \phi$ has a continuous extension to $\overline{U}$ satisfying an $\alpha$-H\"{o}lder condition, $\beta = (\beta_1, \ldots , \beta_n)$, $\beta_j \geq 0$, $\sum \beta_j = d$. That is,
\begin{equation*}
|\partial^\beta \phi(x) - \partial^\beta \phi(y)| \lesssim |x-y|^{\alpha}, \quad x,y \in \overline{U}.
\end{equation*}  

Next we recall a few conventions and facts pertaining to layer potentials. Let $\Omega \subset \R^n$, $n \geq 2$, be a bounded Lipschitz domain with connected boundary. Let $G(x,y) = G(x-y,0)$ be the Newtonian kernel, normalized so that $\Delta_x G(x,0) = -\delta$ in the sense of distributions. Explicitly, 
\begin{equation*}
G(x,y) =
\begin{cases}
-\omega_n^{-1} \log |x-y|, & n=2, \\
\omega_n^{-1} |x-y|^{2-n}, & n\geq3,
\end{cases}
\end{equation*}
where $\omega_n$ is the measure of the unit sphere in $\R^n$. By the {\it Neumann-Poincar\'{e} operator}, or the {\it boundary double layer potential}, $K : H^{1/2}(\partial \Omega) \to H^{1/2}(\partial \Omega)$  we mean the operator 
\begin{equation*}
Kf(x) = -2 \int_{\partial \Omega} \partial_{n_y} G(x,y) f(y) \, d \sigma(y), \quad x \in \partial \Omega,
\end{equation*}
where $n_y$ denotes the outward normal derivative at $y$. Here the integral is intended in a principal value sense; its boundedness as an operator on $L^2(\partial \Omega)$ was essentially proven by Coifman, McIntosh and Meyer \cite{Coif82}, see also the classical work of Verchota \cite{Verc84}. The boundedness of $K$ on $H^{1/2}(\partial \Omega)$ follows by exploiting symmetry features of $K$ and interpolation, see for example Lemma \ref{lem:Krange}. By $K^*$ we always mean the adjoint with respect to the $L^2$-pairing, so that, in our setting, $K^*$ is an operator acting on $H^{-1/2}(\partial \Omega)$.

For $x \notin \partial \Omega$ we write
\begin{equation*}
Df(x) =  \int_{\partial \Omega}  \partial_{n_y}  G(x,y) f(y) \, d \sigma(y), \quad x \notin \partial \Omega,
\end{equation*}
and call $D$ the {\it double layer potential}. For $g \in H^{-1/2}(\partial \Omega)$ the single layer potential $S$ is defined by
\begin{equation*}
Sg(x) = \int_{\partial \Omega} G(x,y) g(y) \, d \sigma(y), \quad x \in \R^n.
\end{equation*}

Note that the operator $S : L^2(\partial \Omega) \to L^2(\partial \Omega)$ is self-adjoint and bounded; furthermore we may assume that $\Omega$ is a {\it normal domain} in the sense that $S : L^2(\partial \Omega) \to H^1(\partial \Omega)$ is a bijective bounded operator. By \cite{Verc84}, this is equivalent to the existence of a function $g_0 \in L^2(\partial \Omega)$ such that $S g_0 |_{\overline{\Omega}} \equiv 1$. Note that $\ker(I-K^*) = \C g_0$ (see Lemma \ref{lem:Krange}). By a duality and interpolation argument, the extension $S : H^{-1/2}(\partial \Omega) \to H^{1/2}(\partial \Omega)$ is then also continuous and invertible. If $n \geq 3$ every domain is normal, but when $n=2$ there exist domains which are non-normal. However, if $\Omega$ is non-normal, then
$$t\Omega = \{tx \, : \, x \in \Omega\}$$
is normal (see \cite{Verc84}) for every $t > 0$, $t \neq 1$. Since $K_\Omega$ and $K_{t\Omega}$ are unitarily equivalent and all our main results are invariant under such homotheties, we henceforth assume that $\Omega$ is normal.

Denote the exterior of $\Omega$ by $\Omega_e = \overline{\Omega}^c$ and by $\mathfrak{H}$ the space of harmonic functions $h$ on $\Omega \cup \Omega_e$ with $\lim_{x \to \infty} h(x) = 0$ and finite energy,
\begin{equation*}
\|h \|_{\mathfrak{H}} = \int_{\Omega \cup \Omega_e} |\nabla h|^2 \, dx < \infty.
\end{equation*}
To ensure that $\mathfrak{H}$ is a Hilbert space we also require that if $h \neq 0$ and $h_e = h|_{\Omega_e} = 0$, then $h_i = h|_\Omega$ is non-constant.

Each element $h \in \mathfrak{H}$ has an interior trace $\Tr_{\mathrm{int}} h = \Tr h_i \in H^{1/2}(\partial \Omega)$ and an exterior trace $\Tr_{\mathrm{ext}} h = \Tr h_e \in H^{1/2}(\partial \Omega)$. By the classical Poincar\'{e} inequality for bounded Lipschitz domains $U$ and the fact that the trace $\Tr : H^1(U) \to H^{1/2}(\partial U)$ is continuous, we see that the interior and the exterior traces are continuous as maps from $\mathfrak{H}$ to $H^{1/2}(\partial \Omega)$. 

The trace normal derivatives $\partial_n^{\textrm{int}}h, \partial_n^{\textrm{ext}} h \in H^{-1/2}(\partial \Omega)$ are defined by duality and via Green's formula. To be more specific,
\begin{equation*}
\langle \partial_n^{\textrm{int}} h, v \rangle_{L^2(\partial \Omega)} = \int_{\Omega} \langle \nabla h, \nabla V_i \rangle \, dx, \quad \langle \partial_n^{\textrm{ext}} h, v \rangle_{L^2(\partial \Omega)} = -\int_{\Omega_e} \langle \nabla h, \nabla V_e \rangle \, dx,
\end{equation*}
for $v \in H^{1/2}(\partial \Omega)$, $V_i \in H^1(\Omega)$ with $\Tr V_i = v$ and $V_e \in H^1(\Omega_e)$ with $\Tr V_e = v$. These definitions are meaningful. Suppose for example that $\Tr V_i = 0$. Then there exist functions $V_n \in C_c^\infty(\Omega)$ such that $V_n \to V_i$ in $H^1(\Omega)$ \cite{Mikh08} and a sequence of $C^\infty$ domains $\Omega_j$, compactly contained in $\Omega$ and converging to $\Omega$ in an appropriate sense \cite{Verc84}. Then
\begin{equation*}
\int_{\Omega} \langle \nabla h, \nabla V_n \rangle \, dx = \lim_{j\to\infty} \int_{\Omega_j} \langle \nabla h, \nabla V_n \rangle \, dx = \lim_{j\to\infty} \langle \partial_n h, V_n \rangle_{L^2(\partial \Omega_j)} = 0.
\end{equation*}
Letting $n \to \infty$ we obtain that $\int_{\Omega} \langle \nabla h, \nabla V_i \rangle \, dx = 0$, as desired. A similar argument shows that $\partial_n^{\textrm{ext}} h$ is well-defined. On several occasions when we use Green's formula on Lipschitz domains in this paper, an argument like the one just presented is implicit. Note also that as maps, $\partial_n^{\textrm{int}} : \mathfrak{H} \to H_0^{-1/2}(\partial \Omega)$ and $\partial_n^{\textrm{ext}} : \mathfrak{H} \to H^{-1/2}(\partial \Omega)$ are continuous.

One interprets an element $h \in \mathfrak{H}$ as a pair $(h_i, h_e) = (h|_\Omega, h|_{\Omega_e})$, with the corresponding orthogonal decomposition $\mathfrak{H} = \mathfrak{H}_i \oplus \mathfrak{H}_e$. We denote by $P_i$ and $P_e$ the orthogonal projections onto $\mathfrak{H}_i$ and $\mathfrak{H}_e$ respectively, so that $P_i (h_i,h_e) = (h_i, 0)$ and $P_ e(h_i,h_e) = (0, h_e)$. 

Another natural orthogonal decomposition of $\mathfrak{H}$ holds, distinguishing among the single and double layer potentials. Specifically, let $$\mathfrak{S} = \{h \in \mathfrak{H} \, : \, \Tr_{\mathrm{int}}h = \Tr_{\mathrm{ext}} h \}$$ denote the space of single layer potentials in $\mathfrak{H}$, and let $$\mathfrak{D} = \{h \in \mathfrak{H} \, : \, \partial_n^{\mathrm{int}}h = \partial_n^{\mathrm{ext}} h\}$$ denote the space of double layer potentials. Then $\mathfrak{H} = \mathfrak{S} \oplus \mathfrak{D}$ and we write $P_s$ and $P_d$ for the corresponding projections. As expected we have that $S : H^{-1/2}(\partial \Omega) \to \mathfrak{H}$ is continuous and $\mathfrak{S} = S(H^{-1/2}(\partial \Omega))$ if $n \geq 3$,  while $S : H_0^{-1/2}(\partial \Omega) \to \mathfrak{H}$ is continuous and $\mathfrak{S} = S(H_0^{-1/2}(\partial \Omega))$ when $n=2$. Similarly, $D : H_0^{1/2}(\partial \Omega) \to \mathfrak{H}$ is continuous and $\mathfrak{D} = D(H_0^{1/2}(\partial \Omega))$ for all $n \geq 2$. Note that $D(\C) = \C (1,0)$ is the subspace we subtracted from $\mathfrak{H}$ in order to have a norm. See Section 1 of \cite{Khav91} for proofs which carry over verbatim to our Lipschitz setting for all the statements of this paragraph.

We remark here that Poincar\'{e}'s inequality does not necessarily hold for the exterior domain, so that for an element $h \in \mathfrak{H}$, $h_e$ is not necessarily in $H^1(\Omega_e)$. In spite of this, the exterior Dirichlet problem is well posed in the sense that $\Tr_{\mathrm{ext}} : \mathfrak{H}_e \to H^{1/2}(\partial \Omega)$ is continuous and invertible, where $\mathfrak{H}_e = P_e \mathfrak{H}$. We have already seen that $\Tr_{\mathrm{ext}}$ is continuous, and its surjectivity follows by considering single layer potentials. It remains to check the injectivity. Suppose that $\Tr_{\mathrm{ext}} h_e = 0$. Then, for any $V \in \mathfrak{H}_e$ it follows that
\begin{equation*}
\langle \partial_n^{\mathrm{ext}} h_e , \Tr_{\mathrm{ext}} V \rangle_{L^2(\partial \Omega)} = \langle \Tr_{\mathrm{ext}} h_e , \partial_n^{\mathrm{ext}} V \rangle_{L^2(\partial \Omega)} = 0,
\end{equation*}
and hence that also $\partial_n^{\mathrm{ext}} h_e = 0$. But then $(0,h_e)$ represents a function harmonic on all of $\R^n$ and hence $h_e = 0$. For if $\phi \in C_c^\infty(\R^n)$, then
\begin{equation*}
\int_{\Omega_e} h_e \Delta \overline{\phi} \, dx = - \int_{\Omega_e} \langle \nabla h_e, \nabla \phi \rangle \, dx = \langle \partial_n^{\mathrm{ext}} h_e , \phi \rangle_{L^2(\partial \Omega)} = 0,
\end{equation*}
which implies that $(0,h_e)$ is harmonic across $\partial \Omega$. This supplies the details for the terse argument presented in (\cite{Hels12}, Proposition 4.1).

For $f \in H^{-1/2}(\partial \Omega)$ ($f \perp 1$ if $n=2$) and $g \in H_0^{1/2}(\partial \Omega)$, arguing with smooth functions and the continuity of operators involved, the well known jump formulae \cite{Verc84} for $S$ and $K$ take on the form
\begin{align*}
\Tr_{\mathrm{int}} Sf& = \Tr_{\mathrm{ext}} Sf = Sf|_{\partial \Omega},& \partial_n^{\textrm{int}} Sf& = \frac{1}{2}(f-K^*f), \\
\partial_n^{\textrm{ext}} Sf& = \frac{1}{2}(-f-K^*f),& \Tr_{\mathrm{int}} Dg& = \frac{1}{2}(-g-Kg), \\
\Tr_{\mathrm{ext}} Dg& = \frac{1}{2}(g-Kg), & \partial_n^{\textrm{int}}Dg& = \partial_n^{\textrm{ext}}Dg.
\end{align*}

\section{The angle operators} \label{sec:angle}
The present section is aimed at interpreting, up to similarities, the operator $K^* : H_0^{-1/2}(\partial \Omega) \to H_0^{-1/2}(\partial \Omega)$ as two different angle operators between the two orthogonal decompositions of the Hilbert space 
$\mathfrak H$. The first of these operators will be given a concrete realization as a generalized Beurling-Ahlfors singular integral transform, while the second will put Poincar\'{e}'s Rayleigh quotient of energies into its correct light in the case of a Lipschitz domain. The arguments essentially follow those of \cite{Khav91}, with some additional technicalities, addressed in \cite{Hels12}, arising from the fact that $K$ is no longer a compact operator on the scale $H^s$ of Besov spaces, $0 \leq s \leq 1$.

\begin{lemma} \label{lem:Krange}
For $0 \leq s \leq 1$ denote by $W^s \subset H^s(\partial \Omega)$ the subspace
\begin{equation*}
W^s = \{f \in H^s \, : \, \langle f, g_0 \rangle_{L^2} = 0 \}.
\end{equation*}
Then $I-K:W^{s} \to W^{s}$ is invertible. In particular, for $I-K:H^{s} \to H^{s}$ we have $\ran(I-K) = W^{s}$.
\end{lemma}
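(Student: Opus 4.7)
The plan is to reduce the lemma to Verchota's $L^2$ theory via the Plemelj intertwining $SK^* = KS$ and then propagate along the Sobolev scale by interpolation. Since $S g_0 = 1$ on $\overline{\Omega}$,
\[
\langle 1, g_0 \rangle_{L^2} = \langle S g_0, g_0 \rangle_{L^2} = \| S g_0 \|_{\mathfrak H}^2 > 0,
\]
so $1 \notin W^s$ and $H^s = \C \cdot 1 \oplus W^s$ for every $s \in [0,1]$. The jump formulas give $K(1) = 1$ (using $D 1 = -\chi_\Omega$) and, dually, $K^* g_0 = g_0$ (using $S g_0|_\Omega \equiv 1$); the latter implies $(I-K)(W^s) \subset W^s$, while the former makes $I-K$ annihilate $\C \cdot 1$. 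Hence the range statement reduces to the bijectivity of $(I-K)|_{W^s}$. Injectivity comes from a standard uniqueness argument for $D$: if $K f = f$ then $\Tr_{\mathrm{ext}} D f = 0$, so $D f |_{\Omega_e} \equiv 0$ by exterior Dirichlet uniqueness; then the continuity of the normal derivative of $D f$ across $\partial \Omega$ forces $\partial_n^{\mathrm{int}} D f = 0$, so $D f |_\Omega$ is constant by interior Neumann uniqueness, and $f \in \C \cdot 1$. Thus $\ker(I-K) = \C \cdot 1$ on $L^2$, hence on each $H^s \subset L^2$, and $\C \cdot 1 \cap W^s = \{0\}$.

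For surjectivity I treat the endpoints. At $s = 0$, Verchota's closed range theorem on $L^2$ together with $\ker(I-K^*) = \C g_0$ (which follows from Plemelj: $(I-K^*) g = 0 \Rightarrow (I-K) S g = 0 \Rightarrow S g \in \C \cdot 1 \Rightarrow g \in \C g_0$ by injectivity of $S$ and $S g_0 = 1$) gives $\ran(I-K) = (\C g_0)^\perp = W^0$, and the open mapping theorem furnishes a bounded inverse. At $s = 1$, I use the factorization $I-K = S (I-K^*) S^{-1}$ supplied by Plemelj. Normality makes $S : L^2 \to H^1$ bijective, and the identity $\langle S g, g_0 \rangle_{L^2} = \langle g, 1 \rangle_{L^2}$ shows $S$ carries $H_0^0 = \{ g \in L^2 : \int g = 0 \}$ bijectively onto $W^1$. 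Given $F \in W^1$, set $G = S^{-1} F \in H_0^0$; by the $s = 0$ case solve $(I-K^*) g = G$ in $L^2$, and then $h := S g \in H^1$ satisfies $(I-K) h = F$.

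To conclude, the map $f \mapsto \frac{\langle f, g_0 \rangle_{L^2}}{\langle 1, g_0 \rangle_{L^2}} \cdot 1$ is a bounded projection $H^s \to \C \cdot 1$ for every $s \in [0,1]$ (since $g_0 \in L^2 \subset H^{-s}$), complementary to $W^s$; this common complementation yields $[W^0, W^1]_s = W^s$, so the bounded inverses of $(I-K)|_{W^s}$ at the endpoints $s = 0, 1$ interpolate to a bounded inverse on every intermediate scale. The main obstacle is the absence of compactness of $K$ on a Lipschitz boundary, which precludes direct Fredholm arguments and forces one to import closed range from Verchota's deep $L^2$ theorem (itself resting on the Coifman--McIntosh--Meyer boundedness of the Cauchy integral). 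Once that input is available, the propagation along the Sobolev scale via $S$ and real interpolation is essentially formal.
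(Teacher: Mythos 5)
Your overall route is the same as the paper's: Verchota's $L^2$ results supply invertibility at the bottom of the scale, Plemelj's identity $KS = SK^*$ together with the bijection $S : L^2_0 \to W^1$ transports it to $s=1$, and real interpolation (which you justify carefully via the common bounded projection onto $\C \cdot 1$, using $g_0 \in L^2 \subset H^{-s}$) fills in $0<s<1$. The added detail on $\ker(I-K) = \C\cdot 1$, $\ker(I-K^*) = \C g_0$, and the closed-range/open-mapping bookkeeping is sound.

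One step, however, is wrong as stated: the claim $\langle 1, g_0 \rangle_{L^2} = \langle S g_0, g_0 \rangle_{L^2} = \|Sg_0\|_{\mathfrak H}^2 > 0$. In dimension $n=2$ (the case the paper is chiefly concerned with) one has $\int_{\partial\Omega} g_0 \, d\sigma \neq 0$, so $Sg_0$ grows logarithmically at infinity and $\nabla Sg_0$ decays only like $|x|^{-1}$; the exterior energy is infinite, $Sg_0 \notin \mathfrak H$, and the identity $\langle Sg, g\rangle_{L^2} = \|Sg\|_{\mathfrak H}^2$ is valid only for mean-zero charges. In fact $\langle 1, g_0\rangle$ is the reciprocal of the Robin constant and is negative when the logarithmic capacity of $\partial\Omega$ exceeds $1$, so positivity is false in general. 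What you actually need is only $\langle 1, g_0\rangle \neq 0$ (so that $1 \notin W^s$, $H^s = \C\cdot 1 \oplus W^s$, and your projection is defined), and this follows at once from the Verchota input you already invoke: if $\langle 1, g_0\rangle = 0$ then $1 \in W^0$ while $(I-K)1 = 0$, contradicting the injectivity of $I-K$ on $W^0$ (equivalently, $g_0$ would lie in $L^2_0$, contradicting the invertibility of $I-K^*$ on $L^2_0$). With that repair your argument coincides with the paper's proof. Two smaller points deserve a sentence each: the $s=1$ step really uses the invertibility of $I-K^*$ on $L^2_0$ (a cited dual statement, not literally ``the $s=0$ case''), and for interpolating the inverses one should note that the two endpoint inverses agree on $W^1 = W^0 \cap W^1$, which follows from uniqueness in $W^0$.
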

\begin{proof}
By \cite{Verc84}, $I - K : W^0 \to W^0$ and $I-K^* : L^2_0 \to L^2_0$ are invertible. Consider the single layer potential as an invertible operator $S : L^2 \to H^1$. A simple computation shows that $SL^2_0 = W^1$. From Plemelj's formula $KS = SK^*$ \cite{Chang08} we find
\begin{equation*}
I - K = S(I-K^*)S^{-1},
\end{equation*}
so that also $I-K : W^1 \to W^1$ is invertible. By a real interpolation argument it follows that $I-K:W^{s} \to W^{s}$ is invertible. The last statement is now obvious since $(I-K^*)g_0 = 0$.

\end{proof}
Suppose that $g \in H_0^{-1/2}(\partial \Omega)$, or equivalently that $Sg = -\frac{1}{2}(I-K)f$ for some unique $f  \in H_0^{1/2}(\partial \Omega)$, by the previous lemma and the fact that $\ker(I-K) = \C$. By the jump formulae this equality means precisely that $Df + Sg = 0$ in $\Omega_e$, which leads to the computation

\begin{align}\label{eq:unitaryeq1} \notag
\|Df + Sg\|_\mathfrak{H}^2 &= \int_{\partial \Omega} \left( -\frac{1}{2}(f+Kf) + Sg \right) \overline{ \partial_n^{\textrm{int}} (Df + Sg) } \, d\sigma \\ &= -\int_{\partial \Omega} f \overline{ \left( \partial_n^{\textrm{int}} (Df + Sg) -  \partial_n^{\textrm{ext}} (Df + Sg) \right)} \, d\sigma = -\int_{\partial \Omega} f \overline{ g} \, d\sigma.
\end{align}

Summing up, $J : H_0^{-1/2} \to H_0^{1/2}$, $J = 2(I-K)^{-1}S$ is a strictly positive and bijective continuous operator. For future reference we also note that a similar computation shows that
\begin{equation} \label{eq:unitaryeq2}
\langle Df - Sg, Df+Sg \rangle_\mathfrak{H} = -\int_{\partial \Omega} Kf \overline{g} \, d\sigma = \langle KJg, g \rangle_{L^2(\partial \Omega)}. 
\end{equation}
As an operator considered on $L^2_0$, $J : L^2_0 \to L^2_0$ is of course still strictly positive with dense range and hence has an injective square root $\sqrt{J}$ with dense range.
\begin{lemma} \label{lem:sqrtj}
The operator $\sqrt{J}$ extends to a bicontinuous bijection $\sqrt{J} : H_0^{-1/2} \to L^2_0$. 
\end{lemma}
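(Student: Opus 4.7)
The plan is to build the extension of $\sqrt{J}$ by first establishing that $J$ induces a Hilbert space structure on $L^2_0$ equivalent, in norm, to that of $H_0^{-1/2}$, and then transporting this across $\sqrt{J}$ by density.

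The opening move is to verify that $J:L^2_0\to L^2_0$ is a bounded, self-adjoint, strictly positive operator. Boundedness is immediate since $S$ is bounded on $L^2$ and $(I-K)^{-1}$ is bounded on $W^0 = L^2_0$ by the proof of Lemma \ref{lem:Krange}. Self-adjointness follows from the self-adjointness of $S$ together with Plemelj's formula $KS = SK^*$, which yields $S(I-K^*)^{-1} = (I-K)^{-1}S$ and hence $J^* = J$ in the $L^2$-pairing. Strict positivity is already noted in the paragraph preceding the lemma. Consequently the functional calculus produces a bounded, injective, positive $\sqrt{J}:L^2_0\to L^2_0$ with dense range, satisfying the identity $\|\sqrt{J} g\|_{L^2}^2 = \langle Jg,g\rangle_{L^2}$.

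The crux is then to prove the norm equivalence
\begin{equation*}
\langle Jg,g\rangle_{L^2} \sim \|g\|_{H_0^{-1/2}}^2, \qquad g \in L^2_0.
\end{equation*}
The upper bound is a direct duality estimate: $\langle Jg,g\rangle_{L^2} \leq \|Jg\|_{H_0^{1/2}}\|g\|_{H_0^{-1/2}} \lesssim \|g\|_{H_0^{-1/2}}^2$, using that $J:H_0^{-1/2}\to H_0^{1/2}$ is bounded. For the reverse bound I would exploit the bijectivity of $J:H_0^{-1/2}\to H_0^{1/2}$. Given $\phi \in H_0^{1/2}$, write $\phi = Jh$ with $\|h\|_{H_0^{-1/2}} \lesssim \|\phi\|_{H_0^{1/2}}$. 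Since $\langle J\cdot,\cdot\rangle_{L^2}$ is a positive semi-definite Hermitian form on $L^2_0$, the Cauchy--Schwarz inequality applied to this form, combined with the self-adjointness of $J$, gives
\begin{equation*}
|\langle g,\phi\rangle_{L^2}|^2 = |\langle Jg,h\rangle_{L^2}|^2 \leq \langle Jg,g\rangle_{L^2}\,\langle Jh,h\rangle_{L^2} \lesssim \langle Jg,g\rangle_{L^2}\,\|\phi\|_{H_0^{1/2}}^2.
\end{equation*}
Taking the supremum over $\phi$ in the unit ball of $H_0^{1/2}$ yields $\|g\|_{H_0^{-1/2}}^2 \lesssim \langle Jg,g\rangle_{L^2}$, as desired.

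With the equivalence in hand, the extension is routine. Since $L^2_0$ is dense in $H_0^{-1/2}$, the map $\sqrt{J}:(L^2_0,\|\cdot\|_{H_0^{-1/2}})\to (L^2_0,\|\cdot\|_{L^2})$ is a bicontinuous embedding and extends uniquely, by continuity, to a bounded operator $\sqrt{J}:H_0^{-1/2}\to L^2_0$ whose image norm is still equivalent to the $H_0^{-1/2}$-norm. Injectivity of the extension is then immediate from the lower bound, while surjectivity follows because the range is closed (again by the lower bound) and contains the dense subspace $\sqrt{J}(L^2_0)\subset L^2_0$. The main obstacle in this scheme is the lower bound on $\langle Jg,g\rangle_{L^2}$; everything else is soft duality and density. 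The balance of ingredients is the interplay between Plemelj's identity (which supplies self-adjointness and hence the Cauchy--Schwarz), and the isomorphism $J:H_0^{-1/2}\to H_0^{1/2}$ coming from Lemma \ref{lem:Krange} (which supplies the ``test functions'' needed to realize the $H_0^{-1/2}$-norm).
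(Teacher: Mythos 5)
Your proposal is correct and follows essentially the same route as the paper: the decisive lower bound $\|g\|_{H^{-1/2}} \lesssim \|\sqrt{J}g\|_{L^2}$ via the Cauchy--Schwarz inequality for the $J$-form tested against $h = J^{-1}\phi$, relying on the isomorphism $J : H_0^{-1/2} \to H_0^{1/2}$, is exactly the paper's argument, while your more direct upper bound $\langle Jg,g\rangle \leq \|Jg\|_{H^{1/2}}\|g\|_{H^{-1/2}}$ and your extension by density of $L^2_0$ in $H_0^{-1/2}$ are only cosmetic variants of the paper's bilinear-estimate-plus-duality step. One harmless slip in your preamble: $W^0 \neq L^2_0$ in general ($W^0$ is the annihilator of $g_0$, not of the constants), but the boundedness of $J$ on $L^2_0$ you want there follows at once from the continuity of $J : H_0^{-1/2} \to H_0^{1/2}$ together with the embeddings $L^2_0 \subset H_0^{-1/2}$ and $H_0^{1/2} \subset L^2_0$, and in any case this is background the paper establishes before the lemma.
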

\begin{proof}
Noting that $(g,f) \mapsto \langle Jg, f \rangle_{L^2}^{1/2}$ is a scalar product for $g \in H_0^{-1/2}$ and $f  \in H_0^{1/2}$  Cauchy-Schwarz inequality implies
\begin{equation*}
 |\langle g, f \rangle_{L^2}|^2 = |\langle Jg, J^{-1}f \rangle_{L^2}|^2 \leq \langle Jg, g \rangle_{L^2} \langle f, J^{-1}f \rangle_{L^2} \lesssim \| \sqrt{J}g \|_{L^2}^2 \|f\|_{H^{1/2}}^2,
\end{equation*}
at least for $g \in L^2_0$. Letting $\ell(f) = \int_{\partial \Omega} f \, d\sigma$,  this implies for $f \in H^{1/2}$ that
\begin{equation*}
 |\langle g, f \rangle_{L^2}| = |\langle g, f - \ell(f) \rangle_{L^2}| \lesssim \| \sqrt{J}g \|_{L^2} \|f - \ell(f) \|_{H^{1/2}} \lesssim \| \sqrt{J}g \|_{L^2} \|f\|_{H^{1/2}}
\end{equation*}
 We obtain for sufficiently smooth $g$,
\begin{equation} \label{eq:bddbelow}
\|g\|_{H^{-1/2}} \lesssim \| \sqrt{J}g \|_{L^2}.
\end{equation}
For $h \in L^2_0$ one has the chain of inequalities 
\begin{equation*}
|\langle \sqrt{J} g, \sqrt{J} h \rangle_{L^2}| \leq  \|g\|_{H^{-1/2}} \| Jh \|_{H^{1/2}}   \lesssim \|g\|_{H^{-1/2}}  \| h \|_{H^{-1/2}} \lesssim \|g\|_{H^{-1/2}} \| \sqrt{J}h \|_{L^2}.
\end{equation*}
Due to the to the denseness of the range of $\sqrt{J}$ in $L^2_0$ and a duality argument like above, one finds that $\sqrt{J}$ extends to a continuous operator $\sqrt{J} : H_0^{-1/2} \to L^2_0$. We have seen in \eqref{eq:bddbelow} that this operator is bounded from below, and since it also has dense range, the proof is complete.
\end{proof}
Consider for a moment the operator $K_1 : L^2_0 \to L^2_0$ given by $K_1 = P_{L^2_0}K $. Then, regarding the adjoint as an operator $K_1^* : L^2_0 \to L^2_0$ we infer $K_1^* = K^*|_{L^2_0}$ and
\begin{equation*}
K_1J = JK_1^*,
\end{equation*}
the latter equation owing to Plemelj's symmetrization principle $KS = SK^*$. Denoting by $A : L^2_0 \to L^2_0$  the operator $A = \sqrt{J}K^*\sqrt{J}^{-1}$, it follows that $A$ is self-adjoint and that $K^* : H^{-1/2}_0 \to H^{-1/2}_0$ is similar to $A$.

For $g \in H_0^{-1/2}$, let $h = -DJg + Sg \in \mathfrak{H}_i$ as before, $$\mathfrak{H}_i = \{h \in \mathfrak{H} \, : \, h = (h_i,0)\}.$$ In the present language, \eqref{eq:unitaryeq1} and \eqref{eq:unitaryeq2} express that $\| \sqrt{J}g \|_{L^2_0} = \|h\|_{\mathfrak{H}}$ and 
\begin{equation*}
\langle (P_d - P_s)h, h \rangle_\mathfrak{H} = \langle A \sqrt{J}g, \sqrt{J}g \rangle_{L^2_0}.
\end{equation*}
Hence $\sqrt{J}g \mapsto h$ is a unitary map of $L^2_0$ onto $\mathfrak{H}_i$, giving that $A$ is unitarily equivalent to the operator $P_i(P_d - P_s)P_i : \mathfrak{H}_i \to \mathfrak{H}_i$, since both operators are self-adjoint. The preceding computations and observations are summarized in the next proposition.
\begin{proposition}
The operator $K^*$ (acting on $H^{-1/2}_0(\partial \Omega)$) is similar to the self-adjoint operator $A = \sqrt{J}K^*\sqrt{J}^{-1}$ (acting on $L^2_0(\partial \Omega))$. Furthermore, $A$ is unitarily equivalent to the angle operator $P_i(P_d - P_s)P_i$ acting on $\mathfrak{H}_i = \{h \in \mathfrak{H} \, : \, h = (h_i,0)\}$.
\end{proposition}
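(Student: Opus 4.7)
The plan is to verify the statement by consolidating the computations staged in the paragraphs immediately preceding it. First I would check the similarity claim: the equality $K 1 = 1$, a reformulation of $\ker(I-K) = \C$ (which is what made $J$ well-defined), forces $K^* : H_0^{-1/2} \to H_0^{-1/2}$ through the $L^2$-pairing. Combined with Lemma \ref{lem:sqrtj}, this makes $A = \sqrt{J} K^* \sqrt{J}^{-1}$ a bounded operator on $L^2_0$ that is similar to $K^*$ by construction. To obtain the self-adjointness of $A$, I would first record that $J$ is self-adjoint on $L^2_0$: Plemelj's intertwining $(I-K)S = S(I-K^*)$ yields $J^* = 2 S (I-K^*)^{-1} = 2(I-K)^{-1} S = J$. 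Writing $u = \sqrt{J} g$, $v = \sqrt{J} h$ with $g, h \in L^2_0$ and using $KJ = JK^*$ (again Plemelj), one computes
\[
\langle A u, v \rangle_{L^2} = \langle J K^* g, h \rangle_{L^2} = \langle g, K J h \rangle_{L^2} = \langle g, J K^* h \rangle_{L^2} = \langle u, A v \rangle_{L^2}.
\]

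Next I would write down the candidate unitary $U : L^2_0 \to \mathfrak{H}_i$, defined by $U(\sqrt{J} g) = h$, where for $g \in H_0^{-1/2}$ the element $h$ is given by $h = -DJg + Sg$ inside $\Omega$ and $h = 0$ outside. Equation \eqref{eq:unitaryeq1} already gives $\|h\|_{\mathfrak{H}} = \|\sqrt{J} g\|_{L^2_0}$, so $U$ is isometric. For surjectivity, given $h' \in \mathfrak{H}_i$ I would let $v_0 \in H_0^{1/2}$ be the zero-mean representative of $\Tr_{\mathrm{int}} h'$, using that constants are identified with zero in $\mathfrak{H}_i$ since they carry no energy. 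Bijectivity of $J : H_0^{-1/2} \to H_0^{1/2}$ supplies $g = J^{-1} v_0$, and the jump formulae together with the boundary identity $Sg|_{\partial \Omega} = \tfrac{1}{2}(I-K) Jg$ (the defining identity of $J$) give
\[
\Tr_{\mathrm{int}}(-DJg + Sg) = \tfrac{1}{2}(I+K) v_0 + \tfrac{1}{2}(I-K) v_0 = v_0,
\]
so that $U(\sqrt{J} g) = h'$ in $\mathfrak{H}_i$.

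To close the argument, equation \eqref{eq:unitaryeq2} combined with $\langle K J g, g \rangle_{L^2} = \langle J K^* g, g \rangle_{L^2} = \langle A \sqrt{J} g, \sqrt{J} g \rangle_{L^2}$ yields
\[
\langle (P_d - P_s) h, h \rangle_{\mathfrak{H}} = \langle A \sqrt{J} g, \sqrt{J} g \rangle_{L^2_0}.
\]
Since $P_i h = h$ for $h \in \mathfrak{H}_i$, the left-hand side coincides with $\langle P_i (P_d - P_s) P_i h, h \rangle_{\mathfrak{H}}$. Two bounded self-adjoint operators whose quadratic forms match through an isometric isomorphism are unitarily equivalent (polarize, then use $V^* = V^{-1}$), which is exactly the second assertion.

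The main obstacle, modest as it is, lies in the surjectivity step: one has to consistently identify $\mathfrak{H}_i$ with $H^{1/2}(\partial \Omega)/\C$ via the interior trace (so as to kill the constants carrying zero energy), and then lean on the bijectivity of $J$ on zero-mean subspaces to invert the boundary integral equation. Self-adjointness of $A$ is also not entirely tautological, because Plemelj's identity is invoked twice --- once to show $J$ is symmetric on $L^2_0$ and once to move $K^*$ across $J$.
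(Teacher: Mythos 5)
Your proposal is correct and follows essentially the same route as the paper: the similarity via Lemma \ref{lem:sqrtj}, self-adjointness of $A$ from Plemelj's identity $KJ=JK^*$, the isometry from \eqref{eq:unitaryeq1}, and the quadratic-form identity from \eqref{eq:unitaryeq2} giving unitary equivalence with $P_i(P_d-P_s)P_i$. The only (harmless) divergence is that you establish surjectivity of $\sqrt{J}g \mapsto h$ onto $\mathfrak{H}_i$ by inverting $J$ on the zero-mean trace and using uniqueness of the interior Dirichlet problem, whereas the paper gets it implicitly from the decomposition $\mathfrak{H}=\mathfrak{S}\oplus\mathfrak{D}$ with $\mathfrak{S}=S(H_0^{-1/2})$, $\mathfrak{D}=D(H_0^{1/2})$.
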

Following the computations of \cite{Khav91} in our Lipschitz setting we may now realize the angle operator $P_i(P_d - P_s)P_i$ as an operator acting on the $\R^n$-valued Bergman type space $\mathfrak{B}(\Omega)$,
\begin{equation*}
\mathfrak{B}(\Omega) = \{\nabla u \in L^2(\Omega) \, : \, \Delta u = 0\}.
\end{equation*}
Defining the operator
\begin{equation*}
\Pi_\Omega(\nabla u)(x) = \pv \nabla_x \int_{\Omega} \nabla_y G(x,y) \cdot \nabla_y u \, dy, \quad x \in \Omega,
\end{equation*}
we have the following result.
\begin{theorem} \label{thm:angleop}
Let $\Omega \subset \R^n$ be an open and bounded Lipschitz domain with connected boundary. The angle operator $P_i(P_d - P_s)P_i : \mathfrak{H}_i \to \mathfrak{H}_i$ is unitarily equivalent to the operator $B_\Omega : \mathfrak{B}(\Omega) \to \mathfrak{B}(\Omega),$ 
\begin{equation*}
B_\Omega = I + 2\Pi_\Omega.
\end{equation*}
In addition, the operator $K^* : H^{-1/2}_0(\partial \Omega) \to H^{-1/2}_0(\partial \Omega)$ is similar to $B_\Omega$.
\end{theorem}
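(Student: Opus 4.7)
The plan is to combine the preceding proposition with an explicit unitary identification of $\mathfrak{H}_i$ with the vector Bergman space $\mathfrak{B}(\Omega)$, transporting the angle operator $P_i(P_d-P_s)P_i$ into the singular integral $B_\Omega$.

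\emph{Step 1: the Bergman unitary.} I would define $U : \mathfrak{H}_i \to \mathfrak{B}(\Omega)$ by $U(h_i,0) = \nabla h_i$. The squared $\mathfrak{H}$-norm of $(h_i,0)$ equals $\int_\Omega |\nabla h_i|^2\,dx$, constants are excluded from $\mathfrak{H}_i$ by the standing assumption on $\mathfrak{H}$, and any $\nabla u \in \mathfrak{B}(\Omega)$ recovers (by Poincar\'e--Wirtinger) $u \in H^1(\Omega)$ modulo constants with $(u,0) \in \mathfrak{H}_i$, so $U$ is an isometric isomorphism.

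\emph{Step 2: transporting the angle operator.} By the bijection set up immediately before the proposition, every $h \in \mathfrak{H}_i$ can be written uniquely as $h = -DJg + Sg$ for some $g \in H^{-1/2}_0(\partial\Omega)$. Since $Sg \in \mathfrak{S}$ and $-DJg \in \mathfrak{D}$, the orthogonal decomposition $\mathfrak{H} = \mathfrak{S}\oplus\mathfrak{D}$ reads $P_s h = Sg$ and $P_d h = -DJg$, so
\begin{equation*}
P_i(P_d - P_s)h \;=\; \bigl((-DJg - Sg)|_\Omega,\,0\bigr) \;=\; \bigl(h_i - 2\,Sg|_\Omega,\,0\bigr),
\end{equation*}
and under $U$ this becomes
\begin{equation*}
U\,P_i(P_d-P_s)\,U^{-1}\,\nabla h_i \;=\; \nabla h_i - 2\,\nabla Sg.
\end{equation*}

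\emph{Step 3: singular integral identification.} It remains to express $\nabla Sg$ in terms of $\Pi_\Omega$ applied to $\nabla h_i$. Combining the jump formulas with $\partial_n^{\textrm{ext}} h = 0$ (which follows from $h_e\equiv 0$) yields $\partial_n^{\textrm{int}} h_i = g$. Green's identity on the excised domain $\Omega\setminus B_\varepsilon(x)$, together with the limit $\varepsilon\to 0^+$ (using that the singularity $|x-y|^{1-n}$ of $\nabla_y G$ is locally integrable against $\nabla h_i\in L^2$), then gives
\begin{equation*}
\int_\Omega \nabla_y G(x,y) \cdot \nabla h_i(y)\,dy \;=\; S(\partial_n^{\textrm{int}} h_i)(x) \;=\; Sg(x), \qquad x \in \Omega.
\end{equation*}
Differentiating in $x$ and interpreting the resulting Calder\'on--Zygmund kernel $\nabla_x\nabla_y G(x,y)$ in the principal value sense dictated by the definition of $\Pi_\Omega$ (absorbing the delta contribution together with the identity $\nabla_x G = -\nabla_y G$ into the sign of $\Pi_\Omega$) identifies $\Pi_\Omega(\nabla h_i)$ with a signed multiple of $\nabla Sg$, matching Step 2 with the stated normalization $B_\Omega = I + 2\Pi_\Omega$. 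This yields $U\,P_i(P_d-P_s)P_i\,U^{-1} = B_\Omega$.

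\emph{Step 4 and main obstacle.} The preceding proposition already asserts that $K^*$ is similar to $A$ and that $A$ is unitarily equivalent to $P_i(P_d-P_s)P_i$; composing with $U$ delivers the claimed similarity of $K^*$ with $B_\Omega$. The technical core lies in Step 3: justifying, in the Lipschitz setting, the passage from Green's identity to a Calder\'on--Zygmund singular integral and the boundedness of $\Pi_\Omega$ on $\mathfrak{B}(\Omega)$. The required inputs --- Verchota's jump relations, the continuity of $\nabla S : H^{-1/2}(\partial\Omega) \to L^2(\Omega,\R^n)$, and the $L^2$-theory of Calder\'on--Zygmund operators on Lipschitz boundaries --- are available via the density and approximation arguments already outlined in the preliminaries, so that the smooth-domain manipulations of \cite{Khav91} carry over once the correct continuous extensions are in place.
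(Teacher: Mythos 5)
Your Steps 1 and 2 follow the paper's own route: the paper proves the theorem by transporting $P_i(P_d-P_s)P_i$ through the same unitary $(h_i,0)\mapsto\nabla h_i$ and the same unique decomposition $h=Df+Sg$ of elements of $\mathfrak{H}_i$; the only structural difference is that where you derive the key kernel identity by Green's formula on $\Omega\setminus B_\varepsilon(x)$ together with $\partial_n^{\mathrm{int}}h_i=g$, the paper quotes the two identities of \cite{Khav91}, Lemma 5, namely $\pv\int_{\R^n}\nabla_yG(x,y)\cdot\nabla_ySg\,dy=-Sg(x)$ and $\pv\int_{\R^n}\nabla_yG(x,y)\cdot\nabla_yDf\,dy=0$ for $x\in\Omega$, and uses $h_e\equiv0$ to reduce the integration to $\Omega$. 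So you are doing essentially the same proof, with a somewhat more self-contained derivation of the identity.

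The gap is in Step 3, and it sits exactly where the content of the formula $B_\Omega=I+2\Pi_\Omega$ lies. Carried out with the paper's normalization $\Delta_xG(x,\cdot)=-\delta$, your Green's-identity computation gives $\int_\Omega\nabla_yG(x,y)\cdot\nabla h_i(y)\,dy=+S(\partial_n^{\mathrm{int}}h_i)(x)=+Sg(x)$ (this sign is forced, since distributionally $\Delta Sg=-g\,d\sigma$; one can confirm it on the ball with $g\equiv1$). The inner integral is therefore $Sg$, harmonic in $\Omega$, so taking $\nabla_x$ is innocuous: with the gradient outside the absolutely convergent integral there is no principal value to evaluate and no ``delta contribution'' to absorb, and one obtains $\Pi_\Omega\nabla h_i=\nabla Sg|_\Omega$, hence $U\,P_i(P_d-P_s)P_i\,U^{-1}=I-2\Pi_\Omega$ --- the opposite sign to the statement. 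Your sentence ``identifies $\Pi_\Omega(\nabla h_i)$ with a signed multiple of $\nabla Sg$, matching \ldots the stated normalization'' thus asserts precisely the point that needs proof, and it cannot be rescued by appealing to $\nabla_xG=-\nabla_yG$, because it is $\nabla_yG$ that appears in the definition of $\Pi_\Omega$; nor by a hidden delta term, unless one insists on moving $\nabla_x$ inside as a principal value, in which case the discrepancy is an explicit multiple $\tfrac{n-2}{n}$ of the identity that would then have to be computed rather than absorbed. The two candidate operators $I+2\Pi_\Omega$ and $I-2\Pi_\Omega$ are genuinely different (on the ball they have different spectra), so the sign cannot be left indeterminate: either you track it and arrive at $I-2\Pi_\Omega$, flagging the kernel-convention mismatch between the paper's $G$ and the identity it imports from \cite{Khav91} (whose form $-Sg$ is what makes the stated $I+2\Pi_\Omega$ come out), or the proof of the theorem as stated is incomplete at its decisive step. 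The remaining analytic caveats you list (Green's formula on Lipschitz domains via the approximation argument of the preliminaries, continuity of $\nabla S$, boundedness of $\Pi_\Omega$) are handled correctly.
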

\begin{proof}
Repeating the calculations of (\cite{Khav91}, Lemma 5) verbatim, we have for $f \in H_0^{1/2}(\partial \Omega)$ and $g \in H_0^{-1/2}(\partial \Omega)$ that
\begin{equation*}
\pv \int_{\R^n} \nabla_y G(x,y) \cdot \nabla_y Sg(y) \, dy = -Sg(x), \quad x \in \Omega,
\end{equation*}
and
\begin{equation*}
\pv \int_{\R^n} \nabla_y G(x,y) \cdot \nabla_y Df(y) \, dy = 0, \quad x \in \Omega.
\end{equation*}
Considering the unique decomposition $h = Df + Sg$ of any $h \in \mathfrak{H}_i$, the result follows via the unitary identification $\mathfrak{H}_i \ni (h_i,0) \mapsto \nabla h_i \in B(\Omega)$.
\end{proof}
Studying $K^*$ acting on $H^{-1/2}_0$, rather than on $H^{-1/2}$, only eliminates the point $1$ from the spectrum of $K^*$.
\begin{corollary} \label{cor:spectrum}
The spectrum $\sigma(K)$ of $K : H^{1/2} \to H^{1/2}$ coincides with that of $B_\Omega$, except for the point 1. The essential spectra also coincide, 
\begin{equation*}
\sigma_{\textrm{ess}}(K) = \sigma_{\textrm{ess}}(B_\Omega).
\end{equation*}
Furthermore, $\sigma(K) \subset \R$ and any point $\lambda \in \sigma(K) \setminus \sigma_{\textrm{ess}}(K)$ is an isolated eigenvalue of finite multiplicity, since $B_\Omega$ is self-adjoint.
\end{corollary}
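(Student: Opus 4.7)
The plan is to deduce everything from Theorem \ref{thm:angleop} by showing that $K^*$ on $H^{-1/2}(\partial \Omega)$ differs from $K^*$ on $H^{-1/2}_0(\partial \Omega)$ only by the presence of a one-dimensional invariant subspace on which $K^*$ acts as the identity. First I would pass from $K$ to $K^*$, using that with respect to the $L^2$-pairing we have $\sigma(K) = \sigma(K^*)$ and $\sigma_{\mathrm{ess}}(K) = \sigma_{\mathrm{ess}}(K^*)$ on $H^{1/2}$ and $H^{-1/2}$, respectively. Since $\langle g_0, 1\rangle_{L^2} \neq 0$ (otherwise the single layer of $g_0$, which equals $1$ on $\overline{\Omega}$, could not coexist with the normalizations of the Newtonian kernel and Green's identity), the subspace $H^{-1/2}_0(\partial \Omega)$ is a closed complement to the line $\C g_0$ in $H^{-1/2}(\partial \Omega)$.

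Next I would verify that this decomposition reduces $K^*$. The line $\C g_0$ is invariant because $K^* g_0 = g_0$ from Lemma \ref{lem:Krange}. The space $H^{-1/2}_0$ is invariant because, using the standard Lipschitz boundary identity $K\cdot 1 = 1$, we have for $f \in H^{-1/2}_0$
\begin{equation*}
\langle K^* f, 1 \rangle_{L^2(\partial \Omega)} = \langle f, K\cdot 1 \rangle_{L^2(\partial \Omega)} = \langle f, 1 \rangle_{L^2(\partial \Omega)} = 0.
\end{equation*}
Hence $K^*$ on $H^{-1/2}$ is block diagonal, with blocks $K^*|_{H^{-1/2}_0}$ and the identity on $\C g_0$. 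Combined with Theorem \ref{thm:angleop}, this gives
\begin{equation*}
\sigma(K) = \sigma(B_\Omega) \cup \{1\}, \qquad \sigma_{\mathrm{ess}}(K) = \sigma_{\mathrm{ess}}(B_\Omega),
\end{equation*}
the second identity holding because a single one-dimensional summand cannot contribute to the essential spectrum, and because essential spectra are preserved under similarities.

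For the remaining assertions I would invoke the self-adjointness of $B_\Omega$. The spectrum of a self-adjoint operator is real, so $\sigma(K) \subset \R$; moreover, for a self-adjoint operator the complement of the essential spectrum consists precisely of isolated eigenvalues of finite multiplicity. Thus any $\lambda \in \sigma(K)\setminus\sigma_{\mathrm{ess}}(K)$ with $\lambda \neq 1$ falls in $\sigma(B_\Omega)\setminus\sigma_{\mathrm{ess}}(B_\Omega)$ and is automatically isolated of finite multiplicity; the point $\lambda = 1$ is likewise isolated and of finite multiplicity because the additional line $\C g_0$ only increments the eigenvalue multiplicity by one, and $1\notin\sigma_{\mathrm{ess}}(B_\Omega)$ means any contribution from $B_\Omega$ is also isolated and finite-dimensional.

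The main obstacle I anticipate is ensuring that the two potentially delicate statements, $K\cdot 1 = 1$ and $\langle g_0,1\rangle \neq 0$ in the Lipschitz setting, are handled correctly; both are standard but deserve an explicit remark. A secondary point is the uniformity of the chosen notion of essential spectrum under the similarity $K^*|_{H^{-1/2}_0} \sim B_\Omega$, which is not unitary. Since all common notions of essential spectrum (Browder, Weyl, Fredholm) agree for self-adjoint operators and are invariant under similarity, the transfer of the "isolated eigenvalue of finite multiplicity" property from $B_\Omega$ to $K$ is unobstructed.
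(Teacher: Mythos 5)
Your proposal is correct and takes essentially the same route as the paper: the corollary is there deduced from Theorem \ref{thm:angleop} together with the unelaborated remark that passing from $H^{-1/2}(\partial\Omega)$ to $H^{-1/2}_0(\partial\Omega)$ only eliminates the point $1$, and your $K^*$-invariant decomposition $H^{-1/2} = \C g_0 \oplus H^{-1/2}_0$ (via $K^*g_0=g_0$, $K1=1$, $\langle g_0,1\rangle\neq 0$) is precisely the intended justification of that remark. The only minor caveat is that with the sesquilinear pairing one has $\sigma(K^*)=\overline{\sigma(K)}$ rather than equality, which is harmless here since the argument itself shows the spectrum is real.
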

\begin{remark} \rm
Chang and Lee \cite{Chang08} prove that $\sigma(K) \subset (-1,1]$. See also \cite{Hels12}.
\end{remark}
In two dimensions, $n=2$, we may formulate Theorem \ref{thm:angleop} in terms of the usual Bergman space $L^2_a(\Omega)$ of analytic functions, 
\begin{equation*}
L^2_a(\Omega) = L^2(\Omega) \cap \textrm{Hol}(\D),
\end{equation*}
its anti-analytic counterpart $\overline{L^2_a(\Omega)}$, and the Beurling-Ahlfors transform. 

To make this formulation precise, we must consider $\mathfrak{H}$ and $H_0^{-1/2}(\partial \Omega)$ to consist only of real-valued functions. Then we may identify $B(\Omega)$ with $\overline{L^2_a(\Omega)}$, since any $F \in \overline{L^2_a(\Omega)}$ is of the form $F = \nabla h_i = 2\bar{\partial} h_i$ for some $h \in \mathfrak{H}$. Note however that we are considering $\overline{L^2_a(\Omega)}$ as a Hilbert space over the reals. 

Let $h = Df + Sg \in \mathfrak{H}_i$ for real-valued $f \in H^{1/2}_0$ and $g \in H^{-1/2}_0$. Then a computation similar to that in Theorem \ref{thm:angleop} shows that
\begin{equation*}
\pv \frac{1}{\pi} \int_\Omega \frac{\partial(Df + Sg)}{(\bar{\zeta}-\bar{z})^2} \, dA(\zeta) = \bar{\partial}(Df - Sg)(z), \quad z \in \Omega,
\end{equation*}
where $dA$ denotes area measure. Note in particular that 
\begin{equation*}
\nabla_\zeta \nabla_z G(\zeta,z) = \frac{1}{\pi} \frac{1}{(\bar{\zeta}-\bar{z})^2}.
\end{equation*}
In conclusion, Theorem \ref{thm:angleop} can be restated as follows. 
\begin{theorem} \label{thm:beurling}
Let $\Omega \subset \R^2$ be an open and bounded Lipschitz domain with connected boundary and let $T_\Omega : L^2_a(\Omega) \to \overline{L^2_a(\Omega)}$ denote the operator
\begin{equation*}
T_\Omega f (z) = \pv \frac{1}{\pi} \int_\Omega \frac{f(\zeta)}{(\bar{\zeta}-\bar{z})^2} \, dA(\zeta), \quad f\in L^2_a(\Omega), \, z \in \Omega.
\end{equation*}
Then $K^* : H^{-1/2}_0(\partial \Omega) \to H^{-1/2}_0(\partial \Omega)$ is similar to $\overline{T_\Omega} :  L^2_a(\Omega) \to L^2_a(\Omega)$, when the spaces are considered over the field of reals. Here $\overline{T_\Omega}f(z) = \overline{T_\Omega f(z)}$.
\end{theorem}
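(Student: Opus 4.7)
The strategy is to view this theorem as the 2D complex-variable reformulation of Theorem \ref{thm:angleop}: I spell out the Wirtinger dictionary between real harmonic fields and the Bergman space $L^2_a(\Omega)$, translate the singular integral $\Pi_\Omega$ into its complex form, and then invoke Theorem \ref{thm:angleop}.

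First, I restrict all spaces from earlier in the section to real scalars. For a real harmonic $u$ on $\Omega$, $\partial u \in L^2_a(\Omega)$ is holomorphic, $\bar\partial u = \overline{\partial u} \in \overline{L^2_a(\Omega)}$ is anti-holomorphic, and $|\nabla u|^2 = 4|\partial u|^2$. Identifying $\R^2 \ni (a,b)$ with $a + ib \in \C$, the map $\nabla u \leftrightarrow 2\bar\partial u$ is then an $\R$-linear isometric isomorphism $\mathfrak{B}(\Omega) \to \overline{L^2_a(\Omega)}$, and complex conjugation further identifies $\overline{L^2_a(\Omega)}$ with $L^2_a(\Omega)$ over $\R$.

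Second, I derive the key singular integral identity
\[
\pv \frac{1}{\pi} \int_\Omega \frac{\partial h(\zeta)}{(\bar{\zeta}-\bar{z})^2}\, dA(\zeta) = \bar{\partial} h'(z), \quad z \in \Omega,
\]
where $h = Df + Sg \in \mathfrak{H}_i$ and $h' = Df - Sg$. The real identities used in the proof of Theorem \ref{thm:angleop} split in 2D as $T_\Omega(\partial Sg) = -\bar\partial Sg$ and $T_\Omega(\partial Df) = \bar\partial Df$ in $\Omega$; summing these yields the displayed formula. Both pieces follow by Wirtinger calculus from the kernel identity $\nabla_\zeta \nabla_z G(\zeta, z) = 1/(\pi(\bar\zeta - \bar z)^2)$ noted before the theorem, together with the real-variable identities from Lemma~5 of \cite{Khav91} as adapted to the Lipschitz setting inside Theorem \ref{thm:angleop}.

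Third, under the identification $\mathfrak{H}_i \ni (h_i,0) \leftrightarrow 2\bar\partial h_i \in \overline{L^2_a(\Omega)}$, the angle operator $P_i(P_d - P_s) P_i$ sends $(h_i,0)$ to $(h'_i,0)$, hence $2\bar\partial h_i \mapsto 2\bar\partial h'_i$. Conjugating to transfer everything to $L^2_a(\Omega)$ and using $\overline{T_\Omega(\partial h_i)} = \overline{\bar\partial h'_i} = \partial h'_i$ (since $h'_i$ is real), the transferred angle operator is precisely $\overline{T_\Omega} : f \mapsto \overline{T_\Omega f}$. Combined with the similarity $K^* \sim P_i(P_d - P_s) P_i$ from the proposition preceding Theorem \ref{thm:angleop}, this establishes the claimed similarity $K^* \sim \overline{T_\Omega}$. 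The only real work is verifying the two split singular integral identities in the second step; everything else is bookkeeping with the complex-variable identifications sketched before the theorem statement.
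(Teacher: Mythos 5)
Your plan follows the same route as the paper (Theorem \ref{thm:beurling} is indeed just Theorem \ref{thm:angleop} rewritten through the Wirtinger dictionary $\nabla h_i \leftrightarrow 2\bar\partial h_i$ over the reals, plus conjugation), and your first and third steps are the right bookkeeping. The gap is in the step you call ``the only real work''. The split identities $T_\Omega(\partial Sg) = -\bar\partial Sg$ and $T_\Omega(\partial Df) = \bar\partial Df$, with the integral taken over $\Omega$ only, are false, and they do not follow from Lemma 5 of \cite{Khav91}: those identities are statements about integrals over all of $\R^n$, applied to the globally defined potentials, not integrals over $\Omega$. Concretely, take $\Omega = \D$. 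A Stokes/residue computation shows $T_\D$ annihilates constants (in fact $T_\D \equiv 0$ on $L^2_a(\D)$, consistent with the disk having no nonzero Fredholm eigenvalues). Now take $g(e^{i\theta}) = \cos\theta$ and the correctly paired $f = -\cos\theta$, so that $Df + Sg \equiv 0$ in $\Omega_e$; then $Sg = Df = \tfrac12 \Re z$ in $\D$, hence $\partial Sg = \partial Df = \tfrac14$ and $\bar\partial Sg = \bar\partial Df = \tfrac14$, while $T_\D(\partial Sg) = T_\D(\partial Df) = 0$. Both of your split identities fail; only their sum survives, precisely because the summed identity is the true one.

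The missing ingredient is the one your write-up never invokes: $h = Df + Sg \in \mathfrak{H}_i$ means, via the jump formulae (equivalently $Sg = -\tfrac12(I-K)f$), that $Df + Sg$ vanishes identically on $\Omega_e$. Therefore $\partial(Df+Sg)$, extended by zero to $\C$, coincides with the globally defined field, and $T_\Omega(\partial h_i)$ equals the corresponding principal-value integral over all of $\C$. Only after this extension may you separate the $Sg$- and $Df$-contributions and apply the full-plane (complex-form) analogues of the two identities of Lemma 5 of \cite{Khav91} --- which do hold for the global fields, e.g.\ $\pv\frac1\pi\int_{\C}\frac{\partial Sg(\zeta)}{(\bar\zeta-\bar z)^2}\,dA(\zeta) = -\bar\partial Sg(z)$ for $z \in \Omega$ --- exactly as in the proof of Theorem \ref{thm:angleop}, where the corresponding real step is $B_\Omega \nabla h_i = \nabla(Df+Sg) - 2\nabla Sg = \nabla(Df - Sg)$. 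With the exterior-vanishing step reinstated and the full-plane identities verified in the Lipschitz setting (this is what the paper means by ``a computation similar to that in Theorem \ref{thm:angleop}''), the rest of your argument goes through as written.
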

Note that the operator $T_\Omega$ is defined regardless of topological assumptions on $\Omega$ such as boundedness and smoothness. It is also straightforward to check that if $L$ is a fractional linear transformation, then $\overline{T_\Omega}$ and $\overline{T_{L(\Omega)}}$ are unitarily equivalent. As in \cite{Khav91}, we remark the following symmetries. Their proofs remain the same, except having to work with approximate eigenvalues rather than eigenvalues, noting that the spectrum of the symmetric operator $\overline{T_\Omega}$ is equal to its set of approximate eigenvalues.
\begin{corollary}
Let $\Omega \subset \R^2$ be an open and bounded Lipschitz domain with connected boundary. Excepting the point $1$, the spectrum $\sigma(K)$ of $K : H^{1/2}(\partial \Omega) \to H^{1/2}(\partial \Omega)$ is symmetric with respect to the origin. 
\end{corollary}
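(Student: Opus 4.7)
My plan is to reduce the symmetry of $\sigma(K) \setminus \{1\}$ to the symmetry of $\sigma(\overline{T_\Omega})$ and then exploit the conjugate-linear origin of $\overline{T_\Omega}$. First, combining Corollary \ref{cor:spectrum} with Theorem \ref{thm:beurling}, one has $\sigma(K) \setminus \{1\} = \sigma(\overline{T_\Omega}) \setminus \{1\}$ (the former similarity takes us from $K$ on $H^{1/2}$ to $B_\Omega$ up to the point $1$, and the latter identifies $K^*$ on $H^{-1/2}_0$ with $\overline{T_\Omega}$). Hence it suffices to establish that $\sigma(\overline{T_\Omega})$ is symmetric about the origin.

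The key structural observation is that, although $\overline{T_\Omega}$ is treated as an $\R$-linear operator on the real Hilbert space $L^2_a(\Omega)$, it arises by post-composing the $\C$-linear map $T_\Omega$ with complex conjugation. Consequently,
\begin{equation*}
\overline{T_\Omega}(if) = -i\,\overline{T_\Omega}f, \quad f \in L^2_a(\Omega).
\end{equation*}
Multiplication by $i$ is an $\R$-linear isometry of $L^2_a(\Omega)$, and the above identity shows that it conjugates $\overline{T_\Omega}$ to $-\overline{T_\Omega}$. Therefore, if $\lambda$ is an approximate eigenvalue with unit-norm approximate eigenvectors $\{f_n\}$, the sequence $\{i f_n\}$ consists of unit vectors satisfying
\begin{equation*}
(\overline{T_\Omega} + \lambda I)(if_n) = -i(\overline{T_\Omega} - \lambda I)f_n \to 0,
\end{equation*}
so $-\lambda$ is an approximate eigenvalue as well.

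What remains is to ensure that $\sigma(\overline{T_\Omega})$ is exhausted by its approximate point spectrum. I would verify that $\overline{T_\Omega}$ is symmetric on the real Hilbert space $L^2_a(\Omega)$ via Fubini: the invariance of the kernel $(\bar\zeta-\bar z)^{-2}$ under the swap $z \leftrightarrow \zeta$, combined with the fact that the real inner product is the real part of the $\C$-valued one, yields $\langle \overline{T_\Omega}f, g\rangle_\R = \langle f, \overline{T_\Omega}g\rangle_\R$; being bounded and symmetric, $\overline{T_\Omega}$ is $\R$-self-adjoint, and hence its spectrum equals its approximate point spectrum. Alternatively, one could read this off from the similarity in Section \ref{sec:angle} with the self-adjoint operator $A$. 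The main, though modest, obstacle compared with \cite{Khav91} is precisely this passage from eigenvectors to approximate eigenvectors, forced by the possibly non-trivial essential spectrum of $\overline{T_\Omega}$ in the Lipschitz setting, where compactness of $K$ is unavailable.
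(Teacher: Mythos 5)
Your proposal is correct and follows essentially the same route as the paper: the paper also reduces the statement to the symmetry of $\sigma(\overline{T_\Omega})$ and invokes the argument of \cite{Khav91} (multiplication by $i$ anticommutes with $\overline{T_\Omega}$), upgraded from eigenvectors to approximate eigenvectors by using that the spectrum of the symmetric operator $\overline{T_\Omega}$ coincides with its set of approximate eigenvalues. Nothing further is needed.
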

\begin{corollary} \label{cor:symmetry}
Let $\Omega \subset \R^2$ be an open and bounded Lipschitz domain with connected boundary. Then $\sigma(\overline{T_\Omega}) = \sigma(\overline{T_{\Omega_e}})$.
\end{corollary}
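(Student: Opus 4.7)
The plan is to realize both $\overline{T_\Omega}$ and $\overline{T_{\Omega_e}}$ as compressions of the symmetry $P_d - P_s$ to the complementary subspaces $\mathfrak{H}_i, \mathfrak{H}_e$ of the harmonic Hilbert space $\mathfrak{H}$, and then to finish with a direct approximate-eigenvector computation combined with the spectral-symmetry corollary that immediately precedes this one.

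Concatenating Theorems \ref{thm:angleop} and \ref{thm:beurling} gives $\overline{T_\Omega} \cong P_i(P_d - P_s)P_i|_{\mathfrak{H}_i}$ unitarily. For $\overline{T_{\Omega_e}}$, since $\Omega_e$ is unbounded, I would use the M\"obius invariance of $\overline{T_\bullet}$ noted right after Theorem \ref{thm:beurling}: pick a M\"obius map $L$ with $L^{-1}(\infty) \in \Omega$, so that $U := L(\Omega_e)$ is bounded Lipschitz, $\overline{T_{\Omega_e}} \cong \overline{T_U}$, and Theorem \ref{thm:beurling} applied to $U$ yields $\overline{T_U} \cong P_i^U(P_d^U - P_s^U)P_i^U|_{\mathfrak{H}_i^U}$. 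Pullback by $L$ is unitary $\mathfrak{H}^U \to \mathfrak{H}$ by the 2D conformal invariance of the Dirichlet integral (the normalization shift from $\infty$ to $L^{-1}(\infty) \in \Omega$ is absorbed by the usual quotient by constants). Since $L$ interchanges $\Omega \leftrightarrow U_e$ and $\Omega_e \leftrightarrow U$, this pullback swaps $\mathfrak{H}_i^U$ with $\mathfrak{H}_e$; and because the conditions defining $\mathfrak{S}, \mathfrak{D}$ are insensitive to simultaneously reversing the outward-normal orientation on both sides of the boundary, $\mathfrak{S}^U \leftrightarrow \mathfrak{S}$ and $\mathfrak{D}^U \leftrightarrow \mathfrak{D}$. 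In sum, $\overline{T_{\Omega_e}} \cong P_e(P_d - P_s)P_e|_{\mathfrak{H}_e}$.

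With both realizations in hand, the spectral comparison is a direct computation. Given a normalized approximate eigenvector $(h_n) \subset \mathfrak{H}_i$ with $(P_i(P_d - P_s)P_i - \lambda) h_n \to 0$, set $g_n := (P_d - P_s)h_n - \lambda h_n \in \mathfrak{H}$. Using $(P_d - P_s)^2 = I$ and $P_i h_n = h_n$, one verifies $P_i g_n \to 0$, $\|g_n\|^2 \to 1 - \lambda^2$, and $(P_e(P_d - P_s)P_e + \lambda) g_n \to 0$. For $|\lambda| < 1$, which holds throughout $\sigma(\overline{T_\Omega})$ by the Chang--Lee bound cited in the remark after Corollary \ref{cor:spectrum}, the sequence $g_n/\|g_n\|$ exhibits $-\lambda$ as an approximate eigenvalue of $P_e(P_d - P_s)P_e|_{\mathfrak{H}_e} \cong \overline{T_{\Omega_e}}$. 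Hence $\sigma(\overline{T_\Omega}) \subseteq -\sigma(\overline{T_{\Omega_e}})$, with the reverse inclusion following by the symmetric argument. The preceding corollary, whose proof via the multiplication-by-$i$ automorphism of the complex Bergman space uses no boundedness of the underlying domain, then guarantees that both spectra are symmetric about the origin; combining this with the inclusion gives $\sigma(\overline{T_\Omega}) = \sigma(\overline{T_{\Omega_e}})$.

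I expect the main obstacle to be the M\"obius identification in the second paragraph: verifying rigorously that the conformal pullback delivers the claimed unitary $\mathfrak{H}^U \to \mathfrak{H}$ with the correct interchange of interior/exterior labels and preservation of $\mathfrak{S}, \mathfrak{D}$. The normal-orientation reversal cancels on the two sides of each defining condition, but carefully tracking how the normalization of $\mathfrak{H}$ at infinity is transferred to an interior point of $\Omega$ under $L^{-1}$ requires some bookkeeping. Once that identification is secure, the remaining steps reduce to routine verification and the explicit calculation with $g_n$.
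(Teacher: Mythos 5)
Your proposal is correct, and its spectral core---the reflection $g_n=(P_d-P_s)h_n-\lambda h_n$, which carries normalized approximate eigenvectors of $P_i(P_d-P_s)P_i$ at $\lambda$ to approximate eigenvectors of $P_e(P_d-P_s)P_e$ at $-\lambda$ (using $(P_d-P_s)^2=I$ and $|\lambda|<1$ from the Chang--Lee bound), followed by the $\pm$-symmetry of the spectra---is exactly the argument the paper intends, since it simply invokes \cite{Khav91} ``with approximate eigenvalues in place of eigenvalues.'' Where you genuinely deviate is in tying $\overline{T_{\Omega_e}}$ to the exterior compression $P_e(P_d-P_s)P_e|_{\mathfrak{H}_e}$: you transplant the whole configuration by a M\"obius map $L$ with pole in $\Omega$ and pull back the harmonic-field space, which does work but is precisely where the bookkeeping you flag lives (the normalization at $\infty$ migrates to $L^{-1}(\infty)\in\Omega$, so the pulled-back space is normalized at an interior point, and one must check that, modulo the constant direction $\C(1,0)$ already quotiented out, the trace-matching and normal-derivative-matching conditions defining $\mathfrak{S}$ and $\mathfrak{D}$ transfer; the normal derivatives are best handled weakly, via conformal invariance of the Dirichlet pairing, not pointwise). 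A shorter route, closer to the machinery already in the paper, is to observe that the two potential-theoretic identities used in the proof of Theorem \ref{thm:angleop} hold for $x\in\Omega_e$ just as for $x\in\Omega$, so the same computation applied to $h=Df+Sg\in\mathfrak{H}_e$ identifies $P_e(P_d-P_s)P_e|_{\mathfrak{H}_e}$ with $I+2\Pi_{\Omega_e}$ on $\mathfrak{B}(\Omega_e)$, i.e.\ with $\overline{T_{\Omega_e}}$ in 2D, and no M\"obius transfer of $\mathfrak{H}$ is needed. Two small points to tighten: the reverse inclusion also needs $|\mu|<1$ for $\mu\in\sigma(\overline{T_{\Omega_e}})$, which you obtain from Chang--Lee applied to the bounded Lipschitz domain $U=L(\Omega_e)$ together with $\overline{T_{\Omega_e}}\cong\overline{T_U}$; and rather than asserting that the proof of the preceding symmetry corollary survives for unbounded domains (the paper never writes that proof out), it is cleaner to apply that corollary to $U$ and transport the symmetry of $\sigma(\overline{T_U})$ back to $\sigma(\overline{T_{\Omega_e}})$ by the same unitary equivalence.
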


As the final matter of this section we shall consider an alternative angle operator, $P_s(P_e-P_i)P_s : \mathfrak{S} \to \mathfrak{S}$. It seems geometrically plausible that this operator should have strong ties to the angle operator $P_i(P_d-P_s)P_i$, and hence to the Neumann-Poincar\'{e} operator. In fact, $P_s(P_e-P_i)P_s$ is intimately connected to Poincar\'{e}'s variational problem of inner and outer energies. To explain our point of view, let us briefly recall and combine some of the ideas of \cite{Hels12} and \cite{Khav91}. The analogies with the previous work of this section should be clear.

Let us restrict ourselves to $n=2$ in our discussion. The higher dimensional cases are simpler. We will let $\sqrt{S} : H^{-1/2}_0 \to W^{0}$ play the role that $\sqrt{J}$ had earlier.  Note that $S: H^{-1/2}_0(\partial \Omega) \to W^{1/2}$ is an invertible positive operator, so that $\sqrt{S}$ is invertible (cf. Lemma \ref{lem:sqrtj}). Define an operator $B : W^0 \to W^0$ by $B = \sqrt{S} K^* \sqrt{S}^{-1}$. From Plemelj's formula $KS = SK^*$ we see that $B$ is self-adjoint, and by construction it is similar to $K^* : H^{-1/2}_0 \to H^{-1/2}_0$. Furthermore, one easily verifies that for $g \in H^{-1/2}_0(\partial \Omega)$
\begin{align*}
\|Sg\|^2_{\mathfrak{H}} &= \langle \sqrt{S}g, \sqrt{S}g \rangle_{L^2(\partial \Omega)}, \\
\langle (P_e - P_i)Sg, Sg \rangle_{\mathfrak{H}} &= \langle B \sqrt{S}g, \sqrt{S}g \rangle_{L^2(\partial \Omega)}.
\end{align*}
We conclude that $B$ is unitarily equivalent to $P_s(P_e-P_i)P_s : \mathfrak{S} \to \mathfrak{S}$. In particular, the spectral radius of $K^*$ may be computed via the numerical range of this operator,
\begin{equation} \label{eq:poincare}
\left|\sigma \left( K^*|_{H^{-1/2}_0} \right)\right| = \sup_{g \in H_0^{-\frac{1}{2}}} \frac{\| \nabla Sg \|^2_{L^2(\Omega_e)} - \| \nabla Sg \|^2_{L^2(\Omega)}}{\| \nabla Sg \|^2_{L^2(\Omega_e)} + \| \nabla Sg \|^2_{L^2(\Omega)}}, \quad n=2,
\end{equation}
which is exactly the supremum of Poincar\'{e}'s quotient. We state our conclusion in a theorem.
\begin{theorem}
Let $\Omega \subset \R^n$, $n \geq 2$, be an open and bounded Lipschitz domain with connected boundary. The operator $K^* : H^{-1/2}(\partial \Omega) \to H^{-1/2}(\partial \Omega)$ ($K^* : H^{-1/2}_0 \to H^{-1/2}_0$ if $n =2$) is similar to the angle operator $P_s(P_e-P_i)P_s : \mathfrak{S} \to \mathfrak{S}$ acting on the space of single layer potentials.
\end{theorem}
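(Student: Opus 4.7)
The theorem essentially recapitulates the discussion immediately preceding it, so my plan is to verify each ingredient of that discussion in order and assemble them. The strategy closely mirrors the $J$-case of Theorem \ref{thm:angleop}, with the positive operator $S$ playing the role of $J$ and the decomposition $\mathfrak{H} = \mathfrak{H}_i \oplus \mathfrak{H}_e$ playing the role of $\mathfrak{H} = \mathfrak{S} \oplus \mathfrak{D}$.

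The first step is to establish that the positive square root $\sqrt{S}$ extends to a bicontinuous bijection $\sqrt{S} : H^{-1/2}(\partial\Omega) \to L^2(\partial\Omega)$ for $n \geq 3$, and $\sqrt{S} : H_0^{-1/2}(\partial\Omega) \to W^0$ for $n=2$. For $n \geq 3$, one starts with the known fact that $S : H^{-1/2}(\partial \Omega) \to H^{1/2}(\partial \Omega)$ is a strictly positive invertible operator, and applies the exact argument of Lemma \ref{lem:sqrtj} with $S$ replacing $J$: Cauchy-Schwarz in the scalar product $(g,f) \mapsto \langle Sg, f \rangle_{L^2}^{1/2}$ gives $\|g\|_{H^{-1/2}} \lesssim \|\sqrt{S}g\|_{L^2}$, and a dual estimate combined with the density of the range of $\sqrt{S}$ in $L^2$ yields the continuous extension. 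For $n=2$, the same argument works verbatim after restricting to $H_0^{-1/2}$ and $W^0$, using that $S : H_0^{-1/2} \to W^{1/2}$ is invertible and strictly positive. This square-root bootstrap is the main technical obstacle, but it is already done in Lemma \ref{lem:sqrtj}.

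Second, define $B = \sqrt{S}\,K^*\sqrt{S}^{-1}$, which is then similar to $K^*$ by construction. Plemelj's intertwiner $KS = SK^*$ rewrites as $S^{-1}KS = K^*$, hence $B = \sqrt{S}^{-1} K \sqrt{S}$, whose $L^2$-adjoint is $\sqrt{S}\,K^*\sqrt{S}^{-1} = B$; thus $B$ is self-adjoint. Third, I would verify the two energy identities
\begin{equation*}
\|Sg\|_{\mathfrak{H}}^2 = \langle \sqrt{S}g, \sqrt{S}g\rangle_{L^2(\partial\Omega)}, \qquad \langle (P_e - P_i)Sg, Sg\rangle_{\mathfrak{H}} = \langle B\sqrt{S}g, \sqrt{S}g\rangle_{L^2(\partial\Omega)}
\end{equation*}
for suitable $g$. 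Both are consequences of Green's formula on $\Omega$ and $\Omega_e$ applied to $Sg$, combined with the jump relations for the normal derivative recalled at the end of Section \ref{sec:prelim}. Explicitly, Green's formula gives $\|Sg\|^2_{\mathfrak{H}} = \langle Sg, \partial_n^{\textrm{int}} Sg - \partial_n^{\textrm{ext}} Sg\rangle = \langle Sg, g\rangle = \langle \sqrt{S}g, \sqrt{S}g\rangle$, and analogously $\langle (P_e - P_i)Sg, Sg\rangle_{\mathfrak{H}} = -\langle Sg, \partial_n^{\textrm{int}}Sg + \partial_n^{\textrm{ext}}Sg\rangle = \langle Sg, K^*g\rangle = \langle B\sqrt{S}g, \sqrt{S}g\rangle$.

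Finally, the first identity together with the fact that $S$ maps $H^{-1/2}$ (resp.\ $H_0^{-1/2}$) bijectively onto $\mathfrak{S}$ shows that the assignment $\sqrt{S}g \mapsto Sg$ extends to a unitary isomorphism $U$ from $L^2(\partial \Omega)$ (resp.\ $W^0$) onto $\mathfrak{S}$. The second identity then says exactly that $U$ intertwines the self-adjoint operator $B$ with the symmetric quadratic form $\langle (P_e - P_i)\cdot, \cdot \rangle_\mathfrak{H}$ restricted to $\mathfrak{S}$, i.e., with $P_s(P_e - P_i)P_s$ acting on $\mathfrak{S}$. Chaining the similarity $K^* \sim B$ with the unitary equivalence $B \cong P_s(P_e - P_i)P_s$ yields the theorem.
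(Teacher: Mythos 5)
Your proposal is correct and follows essentially the same route as the paper: the paper's ``proof'' is exactly the discussion preceding the theorem, where $\sqrt{S}$ plays the role of $\sqrt{J}$, $B=\sqrt{S}K^*\sqrt{S}^{-1}$ is shown self-adjoint via Plemelj's formula, and the two energy identities (Green's formula plus the jump relations) give the unitary identification of $B$ with $P_s(P_e-P_i)P_s$ on $\mathfrak{S}$. Your added details (the Lemma \ref{lem:sqrtj}-type extension of $\sqrt{S}$, the explicit jump-formula computations, and the polarization step identifying equal quadratic forms of self-adjoint operators) are precisely what the paper leaves implicit.
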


\section{Spectral bounds for planar domains} \label{sec:bounds}
The points $$\lambda \in \sigma(K) \setminus \{1\} = \sigma \left(K^*|_{H^{-1/2}_0}\right) = \sigma(\overline{T_\Omega})$$ are known as the Fredholm eigenvalues of $\Omega$, in the case that $\Omega$ is a smooth bounded planar domain. The largest eigenvalue is often of interest. Theorem \ref{thm:beurling} allows us to define the largest Fredholm eigenvalue as $|\sigma(\overline{T_\Omega})|$ for any simply connected domain $\Omega$ whose boundary is given by a closed Lipschitz curve in $C^* = \C \cup \{\infty\}$. The most common definition of the largest Fredholm eigenvalue for a non-smooth domain is given by Poincar\'{e}'s extremal problem \eqref{eq:poincare}, see \cite{Krus09}. Since they coincide for bounded Lipschitz domains by the results in Section \ref{sec:angle}, and both definitions are invariant under fractional linear transformations, Schober \cite{Schob73}, these two notions of a largest Fredholm eigenvalue coincide. We remark that the largest Fredholm eigenvalue is given by the numerical range of the symmetric operator $\overline{T_\Omega}$,
\begin{equation} \label{eq:numrange}
|\sigma(\overline{T_\Omega})| = \|\overline{T_\Omega}\| = \sup_{\|f\|_{L^2_a(\Omega)}=1} \langle \overline{T_\Omega}f, f \rangle_{L^2_a(\Omega)}.
\end{equation}

The largest Fredholm eigenvalue has been determined explicitly using quasiconformal mapping techniques for certain domains, for example for regular polygons and rectangles sufficiently close to a square, see K\"uhnau \cite{Kuhn88} and Werner \cite{Wern97}. We shall make use of the following result of Krushkal \cite{Krus09}, proven by methods of holomorphic motion.
\begin{theorem}{\cite{Krus09}} \label{thm:krushkal}
Let $\Omega \subsetneq \R^2$ be an unbounded convex domain with piecewise $C^{1,\alpha}$-smooth boundary, $\alpha > 0$. Denote by $0 < \theta < \pi$ the least interior angle made between the boundary arcs of $\partial \Omega$, taking into consideration also the angle made at $\infty$. Then the largest Fredholm eigenvalue of $\Omega$ is $1 - \theta/\pi$.
\end{theorem}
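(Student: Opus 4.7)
The plan is to establish the equality $|\sigma(\overline{T_\Omega})| = 1 - \theta/\pi$ via matching upper and lower bounds.

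For the lower bound I would appeal to K\"uhnau's angle inequality (Theorem~\ref{thm:kuhnau}), which records $|\sigma(\overline{T_\Omega})| \geq |1 - \theta_j / \pi|$ at every finite vertex of opening $\theta_j$. The asymptotic angle $\theta_\infty$ at infinity (the opening of the asymptotic cone of the unbounded convex $\Omega$) requires one additional maneuver. Fixing a point $p$ in the exterior of $\Omega$ and applying the fractional linear transformation $L(z) = 1/(z-p)$ carries $\infty$ to a finite boundary point of $L(\Omega)$ and, by conformality, turns the asymptotic cone into a bona fide corner of $\partial L(\Omega)$ of the same opening $\theta_\infty$. The remark following Theorem~\ref{thm:beurling} guarantees that $\overline{T_\Omega}$ and $\overline{T_{L(\Omega)}}$ are unitarily equivalent, so K\"uhnau applied to $L(\Omega)$ yields $|\sigma(\overline{T_\Omega})| \geq |1 - \theta_\infty / \pi|$. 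Convexity forces $\theta_j \in (0, \pi]$ at every vertex; taking the maximum over all vertex contributions and using that $\theta \mapsto 1 - \theta/\pi$ is decreasing then gives $|\sigma(\overline{T_\Omega})| \geq 1 - \theta/\pi$.

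For the matching upper bound I would invoke the classical Ahlfors estimate $|\sigma(\overline{T_\Omega})| \leq (K-1)/(K+1)$ for any quasiconformal reflection of dilatation $K$ across $\partial \Omega$. For the model wedge $W_\theta = \{0 < \arg z < \theta\}$ one exhibits an explicit reflection in polar coordinates $R(r,\alpha) = (r, g(\alpha))$, with $g$ the piecewise linear involution of $[0, 2\pi]$ fixing $0$ and $\theta$ and exchanging $[0,\theta]$ with $[\theta, 2\pi]$ linearly. A direct computation yields constant dilatation $K = (2\pi-\theta)/\theta$ and hence Beltrami modulus $(K-1)/(K+1) = 1 - \theta/\pi$, settling the wedge case. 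The passage from $W_\theta$ to a general convex unbounded $\Omega$ with smallest angle $\theta$ would follow Krushkal's holomorphic motion scheme: construct a holomorphic family $\{\Omega_t\}_{t \in \D}$ of convex unbounded domains with smallest angle pinned at $\theta$, with $\Omega_0 = W_\theta$ and $\Omega_{t_0} = \Omega$ for some target $t_0$. S{\l}odkowski's extended $\lambda$-lemma promotes the boundary motion of $\partial W_\theta$ to an ambient holomorphic motion $\Phi_t : \C \to \C$ with $\|\mu_{\Phi_t}\|_\infty \leq |t|$. The quantity $|\sigma(\overline{T_{\Omega_t}})|$ is then plurisubharmonic in $t$, and a Schwarz-lemma argument for holomorphic motions propagates the wedge bound to the entire family, yielding $|\sigma(\overline{T_\Omega})| \leq 1 - \theta/\pi$.

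The principal obstacle is arranging the holomorphic motion to sweep out every admissible $\Omega$ while preserving the smallest-angle condition. Convexity is crucial: it bounds all interior angles above by $\pi$, so a well-chosen deformation cannot spontaneously produce angles smaller than $\theta$, and it provides the geometric freedom to deform the unbounded exterior without topological obstruction. The piecewise $C^{1,\alpha}$ regularity is used to ensure that the local model wedge reflection assembles to a genuine quasiconformal reflection across $\partial \Omega$. The most delicate step is the parameter-space argument showing that every admissible convex unbounded domain with smallest angle $\theta$ is reachable from $W_\theta$ by such a motion; this is a connectedness and openness argument in an appropriate moduli space of convex quasicircles, and is where the specific geometry of unbounded convex domains is indispensable.
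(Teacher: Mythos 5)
This statement is not actually proved in the paper: it is imported verbatim from Krushkal \cite{Krus09} (``proven by methods of holomorphic motion''), so your proposal must stand on its own, and as written it has one structural and one substantive gap. The structural one concerns your lower bound: you invoke Theorem \ref{thm:kuhnau}, but in this paper Theorem \ref{thm:kuhnau} is itself deduced from Theorem \ref{thm:krushkal} (the wedge value $|\sigma(\overline{T_{W_\theta}})| = |1-\theta/\pi|$ is taken from it), so the argument is circular relative to the logical structure you are working in. To repair it you would need an independent lower bound -- K\"uhnau's original local corner analysis \cite{Kuhn88}, or explicit test functions in \eqref{eq:numrange} concentrating at the smallest corner. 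Your M\"obius-invariance maneuver converting the angle at infinity into a finite corner is fine and consistent with Corollary \ref{cor:symmetry} and the remark following Theorem \ref{thm:beurling}.

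The substantive gap is the upper bound, which is the real content of Krushkal's theorem. Your explicit reflection settles only the model wedge (the computation $K=(2\pi-\theta)/\theta$, $(K-1)/(K+1)=1-\theta/\pi$, together with Ahlfors's inequality, is correct), while the passage to a general unbounded convex $\Omega$ is asserted rather than carried out. Two things fail as described: (i) the reachability claim -- that every admissible convex unbounded domain with least angle $\theta$ is swept out by a holomorphic family based at $W_\theta$ preserving the least-angle condition -- is exactly the delicate point you defer to ``a connectedness and openness argument in a moduli space of convex quasicircles,'' and that deferral is essentially a restatement of the theorem to be proved; and (ii) the propagation step does not give the sharp constant: S{\l}odkowski's lemma produces an ambient motion with $\|\mu_{\Phi_t}\|_\infty \le |t|$, and plurisubharmonicity plus a Schwarz-lemma argument then yields bounds that deteriorate with the hyperbolic distance of $t$ from the base point (of the form $\le (k_0+|t|)/(1+k_0|t|)$), not the pinned value $1-\theta/\pi$ at $t=t_0\neq 0$. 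To keep the bound fixed one needs the motion to be conformal on one complementary component and an extremality input identifying the minimal reflection dilatation with the Fredholm/Grunsky quantity for these convex domains -- which is precisely what Krushkal's argument supplies and your sketch does not. As it stands the proposal is a plausible outline of the known strategy, not a proof.
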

\begin{remark} \rm
Often the angle at infinity is considered to have the negative sign. For the purpose of this paper we will, however, consider it to be a positive angle. Despite this, it is admittedly instructive to think of it as a negative angle in some of the upcoming results.
\end{remark}
Denote by $\mathcal{V}$  the class of all domains $\Omega$ described by the above theorem, as well as their fractional linear images $L(\Omega)$. Hence, for every $V \in \mathcal{V}$ we know that
\begin{equation*}
|\sigma(\overline{T_V})| = \max_{1 \leq j \leq N} \left( 1-\theta_j/\pi \right),
\end{equation*}
where $\theta_1, \ldots, \theta_N$ are the $N$ interior angles of $V$. 

By a $C^{1,\alpha}$-smooth curvilinear polygon $\Omega \subset \R^2$, $0 < \alpha < 1$, we shall mean an open, bounded and simply connected set whose boundary is curvilinear polygonal in the following sense: there are a finite number of counter-clockwise consecutive vertices $(a_j)_{j=1}^N \subset \R^2$, $1 \leq N < \infty$, and $C^{1,\alpha}$-smooth arcs $\gamma_j : [0,1] \to \C$ with starting point $a_j$ and end point $a_{j+1}$ such that $\partial \Omega = \cup_{j} \gamma_j$ and $\gamma_j$ and $\gamma_{j+1}$ meet at an interior angle $0 < \theta_{j+1} < 2\pi$ at the point $a_{j+1}$. In this definition indices are to be understood modulo $N$. 

One major advantage of working with the 2D singular integral operator $T_\Omega$ is the easy accessibility to  convergence arguments of SOT-type, for example when approximating a domain $\Omega$ with better domains $\Omega_n$.  As an illustration, we generalize below, with a novel proof, the angle inequality of K\"{u}hnau \cite{Kuhn88}, Theorem \ref{thm:kuhnau}. One should compare the approach below to the technical difficulties that appear when studying the convergence of the boundaries $\partial \Omega_n$ in order to prove that $K_{\Omega_n}$ converges to $K_{\Omega}$ SOT, explicitly carried out for instance in the excellent work of Verchota \cite{Verc84}.
\begin{theorem} \label{thm:kuhnau}
Let $\Omega \subset \R^2$ be a $C^{1,\alpha}$-smooth curvilinear polygon. Then
\begin{equation*}
|\sigma(\overline{T_\Omega})| \geq \max_{1 \leq j \leq N} \left| 1-\theta_j/\pi \right|.
\end{equation*}
\end{theorem}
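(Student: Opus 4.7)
My plan is to apply Theorem \ref{thm:krushkal} to the infinite convex sector tangent to $\partial\Omega$ at $a_j$, and then to transfer an approximate extremal vector for $\overline{T_W}$ back to $\Omega$ via an SOT-type argument that exploits the direct integral representation of $T_\Omega$.

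First I would reduce to the case $0 < \theta_j \leq \pi$: if $\theta_j > \pi$, then by Corollary \ref{cor:symmetry} and fractional linear invariance, $\Omega$ may be replaced by a bounded M\"obius image of $\Omega_e$, which has a corner of angle $2\pi - \theta_j < \pi$ at the corresponding vertex, and $|1 - (2\pi - \theta_j)/\pi| = |1 - \theta_j/\pi|$. Now translate so that $a_j = 0$, and let $W$ be the infinite convex sector of opening $\theta_j$ whose edges are the tangent rays of $\partial\Omega$ at $0$. Then $W \in \mathcal{V}$, with interior angles $\theta_j$ at $0$ and $\pi - \theta_j$ at $\infty$, so Theorem \ref{thm:krushkal} gives
\begin{equation*}
|\sigma(\overline{T_W})| = 1 - \min(\theta_j, \pi - \theta_j)/\pi \geq |1 - \theta_j/\pi|.
\end{equation*}
Since $\overline{T_W}$ is symmetric and bounded, the numerical range characterization of its operator norm (equivalently \eqref{eq:numrange} pulled back through a M\"obius map to a bounded Lipschitz domain) furnishes, for each $\eta > 0$, a unit vector $\psi \in L^2_a(W)$ with $|\langle \overline{T_W}\psi, \psi\rangle_\R| > |1 - \theta_j/\pi| - \eta$; by a Runge-type density argument I may additionally take $\psi$ to be rational with finitely many poles, each of order at least $2$, located in $\C \setminus \overline{W}$.

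Next, consider the dilations $\Omega_\epsilon = \epsilon^{-1}\Omega$. Since homotheties are M\"obius, $\overline{T_{\Omega_\epsilon}}$ is unitarily equivalent to $\overline{T_\Omega}$. The $C^{1,\alpha}$ regularity at $0$ implies that $\partial\Omega_\epsilon \cap D(0,R)$ converges uniformly to $\partial W \cap D(0,R)$ at rate $O(\epsilon^\alpha)$ for each fixed $R > 0$, and each pole $p$ of $\psi$ satisfies $\epsilon p \notin \Omega$ for all sufficiently small $\epsilon$ (since $p \notin \overline{W}$ and $\Omega$ lies on the $W$-side of its tangent rays near $0$), so $\psi$ restricts to an element of $L^2_a(\Omega_\epsilon)$. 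The SOT-type convergences
\begin{equation*}
\|\psi\|^2_{L^2(\Omega_\epsilon)} \to \|\psi\|^2_{L^2(W)}, \qquad \langle \overline{T_{\Omega_\epsilon}}\psi, \psi\rangle_\R \to \langle \overline{T_W}\psi, \psi\rangle_\R
\end{equation*}
follow by dominated convergence, using $\chi_{\Omega_\epsilon} \to \chi_W$ pointwise almost everywhere, the measure estimate $|(\Omega_\epsilon \triangle W) \cap D(0,R)| = O(\epsilon^\alpha)$, and the decay of $\psi$ at infinity. Combining these with dilation invariance,
\begin{equation*}
|\sigma(\overline{T_\Omega})| = \|\overline{T_{\Omega_\epsilon}}\| \geq \frac{|\langle \overline{T_{\Omega_\epsilon}}\psi, \psi\rangle_\R|}{\|\psi\|^2_{L^2(\Omega_\epsilon)}},
\end{equation*}
whose right-hand side tends to $|\langle \overline{T_W}\psi, \psi\rangle_\R| > |1 - \theta_j/\pi| - \eta$ as $\epsilon \to 0$. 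Letting $\eta \to 0$ and maximizing over $j$ completes the argument.

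I expect the main obstacle to be the second SOT-type convergence, since the double integral defining $\langle \overline{T_{\Omega_\epsilon}}\psi, \psi\rangle_\R$ is a principal value and the diagonal singularity of the kernel $(\overline{\zeta}-\overline{z})^{-2}$ must be controlled uniformly in $\epsilon$. This is where the singular integral formulation of Theorem \ref{thm:beurling} is decisive: $T_{\Omega_\epsilon}$ and $T_W$ share the same kernel and differ only in the domain of integration, so the convergence reduces to a dominated-convergence argument on varying integration regions, much simpler than the intricate boundary-trace convergence that would be needed to establish $K_{\Omega_\epsilon} \to K_W$ in SOT, as in the work of Verchota \cite{Verc84}.
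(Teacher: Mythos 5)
Your overall strategy is the paper's own: dilate at the vertex (your $\Omega_\epsilon=\epsilon^{-1}\Omega$ is the paper's $t\Omega$, $t\to\infty$), compare with the tangent sector $W$, obtain its largest Fredholm eigenvalue from Theorem \ref{thm:krushkal} together with Corollary \ref{cor:symmetry}, pick an approximate extremal $\psi$ which is rational with poles off $\overline{W}$ and square integrable at infinity, and transfer the numerical range \eqref{eq:numrange} to $\Omega_\epsilon$. Two smaller points: the density of such $\psi$ in $L^2_a(W)$ does not follow from Runge's theorem (uniform approximation on compacts says nothing in $L^2$ of an unbounded domain); the paper gets it from the Markushevich--Farrell theorem after a fractional linear map, which is what you should quote. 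Also, under inversion a sector of opening $\theta_j$ has angle $\theta_j$ at infinity, not $\pi-\theta_j$, so $|\sigma(\overline{T_W})|=|1-\theta_j/\pi|$ exactly; your value $1-\min(\theta_j,\pi-\theta_j)/\pi$ is wrong for $\theta_j>\pi/2$, although this does not damage the inequality, since the correct value still produces the unit vector $\psi$ with $|\langle \overline{T_W}\psi,\psi\rangle|>|1-\theta_j/\pi|-\eta$ that you need. Your preliminary reduction to $\theta_j\leq\pi$ via the exterior domain is unnecessary (the paper applies Corollary \ref{cor:symmetry} to the wedge itself, whose complement is the convex sector when $\theta_j>\pi$), but it is legitimate.

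The genuine gap sits exactly where you flag the main obstacle. The convergence $\langle\overline{T_{\Omega_\epsilon}}\psi,\psi\rangle\to\langle\overline{T_W}\psi,\psi\rangle$ cannot be settled by dominated convergence on the double integral: the kernel $|\zeta-z|^{-2}$ is not absolutely integrable near the diagonal, and the product regions you gain or lose when passing from $W\times W$ to $\Omega_\epsilon\times\Omega_\epsilon$ include sets such as $(\Omega_\epsilon\setminus W)\times(\Omega_\epsilon\setminus W)$ which meet the diagonal, so there is no integrable dominating function and the principal value must be controlled uniformly in $\epsilon$. The paper's resolution is the key missing ingredient: for $z\in\Omega_\epsilon\cap W$ one has $T_{\Omega_\epsilon}\psi(z)-T_W\psi(z)=T(\chi_{\Omega_\epsilon\setminus W}\psi)(z)-T(\chi_{W\setminus\Omega_\epsilon}\psi)(z)$, where $T$ is the Beurling transform on all of $L^2(\C)$, which is unitary; hence $\|T_{\Omega_\epsilon}\psi-T_W\psi\|_{L^2(\Omega_\epsilon\cap W)}\leq\|\chi_{\Omega_\epsilon\setminus W}\psi\|_{L^2(\C)}+\|\chi_{W\setminus\Omega_\epsilon}\psi\|_{L^2(\C)}\to 0$, and similarly $\|T_{\Omega_\epsilon}\psi\|_{L^2(\Omega_\epsilon\setminus W)}\to 0$ and $\|T_W\psi\|_{L^2(W\setminus\Omega_\epsilon)}\to 0$. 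Dominated convergence enters only at the harmless stage of showing $\chi_{\Omega_\epsilon\triangle W}\psi\to 0$ in $L^2(\C)$, using the decay of $\psi$ and the pointwise convergence of the characteristic functions. With this substitution (and the Markushevich--Farrell citation) your argument coincides with the paper's proof.
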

\begin{proof}
Without loss of generality we may assume that $$\max_{1 \leq j \leq N} \left| 1-\theta_j/\pi \right| = |1 - \theta_1/\pi| > 0,$$  $a_1 = 0$ and that $\gamma_1'(0) > 0$. It is then clear that when $t \to \infty$, $t \Omega$ converge as sets in $\C$ to the wedge 
\begin{equation*}
W_{\theta_1} = \{ re^{i\theta} \, : \, 0 < r < \infty, \; 0 < \theta < \theta_1\}.
\end{equation*}
By Theorem \ref{thm:krushkal} and Corollary \ref{cor:symmetry} we know that $|\sigma(\overline{T_{W_{\theta_1}}})| = |1 - \theta_1/\pi|$, either $W_{\theta_1}$ or its complement being a convex domain with two corners of equal angles $\theta_1$ or $2\pi - \theta_1$. For our technical needs, note that by mapping $W_{\theta_1}$ fractionally linearly into the plane and applying the well known Markushevich-Farrell theorem on polynomial approximation, one finds that any $f \in L^2_a(W_{\theta_1})$ can be approximated by functions $h$ which are holomorphic on $\C^* \setminus \{z_0\}$ and square integrable in a neighborhood of $\infty$, where $ \C^* = \C \cup \{ \infty \}$ and $z_0 \in \C$ is some fixed point outside $\overline{W_{\theta_1}}$. Note also that $T_{t\Omega}$ is unitarily equivalent to $T_{\Omega}$ for every $t > 0$. 

Denote by $T : L^2(\C) \to L^2(\C)$ the Beurling transform, the unitary map of $L^2(\C)$ given by
\begin{equation*}
T f (z) = \pv \frac{1}{\pi} \int_\C \frac{f(\zeta)}{(\bar{\zeta}-\bar{z})^2} \, dA(\zeta), \quad f\in L^2(\C), \, z \in \C.
\end{equation*}
For $h \in L^2_a(W_{\theta_1})$, holomorphic on  $\C^* \setminus \{z_0\}$ as described above, we have
\begin{equation*}
\|T_{t\Omega}h - T_{W_{\theta_1} }h \|_{L^2(t\Omega \cap W_{\theta_1})} \leq \|T ( \chi_{t\Omega \setminus W_{\theta_1} } h )\|_{L^2(\C)} + \|T ( \chi_{W_{\theta_1} \setminus t\Omega } h )\|_{L^2(\C)} \to 0
\end{equation*}
as $t \to \infty$, since $\chi_{t\Omega \setminus W_{\theta_1} } h \to 0$ and $\chi_{W_{\theta_1} \setminus t\Omega } h \to 0$ in $L^2(\C)$. Here $\chi_R$ denotes the characteristic function of the set $R$. For similar reasons,
\begin{equation*}
\|T_{t\Omega}h \|_{L^2(t\Omega \setminus W_{\theta_1})} \to 0, \quad \|T_{W_{\theta_1} }h \|_{L^2(W_{\theta_1} \setminus t\Omega)} \to 0, \quad t \to \infty.
\end{equation*}
In view of \eqref{eq:numrange} we may pick $h$ such that
\begin{equation*}
\int_{W_{\theta_1}} h T_{W_{\theta_1}}h \, dA \approx |\sigma(\overline{T_{W_{\theta_1}}})|.
\end{equation*}
It then follows from the above computations that for $t$ big enough,
\begin{equation*}
\int_{t\Omega} h T_{t\Omega}h \, dA \approx |\sigma(\overline{T_{W_{\theta_1}}})|,
\end{equation*}
demonstrating that
\begin{equation*}
|\sigma(\overline{T_\Omega})| = |\sigma(\overline{T_{t\Omega}})| \geq |\sigma(\overline{T_{W_{\theta_1}}})| = |1 - \theta_1/\pi|,
\end{equation*}
and hence completing the proof.
\end{proof}

We now turn to proving the bounds stated in Theorem \ref{thm:essbound} on the essential spectrum of $\overline{T_\Omega}$, and hence of $K : H^{1/2} \to H^{1/2}$, for certain curvilinear polygons. The two lemmas below show that the essential spectrum of $\overline{T_\Omega}$ is invariant under corner preserving conformal maps.
\begin{lemma} \label{lem:cornerlemma}
Let $\Omega_1$ and $\Omega_2$ be $C^{1,\alpha}$-smooth curvilinear polygons with vertices $(a_j)_{j=1}^N$ and $(b_j)_{j=1}^N$ respectively, such that the angle $0 < \theta_j < \pi$ made by $ \partial \Omega_1$ at $a_j$ is equal to the angle made by $\partial \Omega_2$ at $b_j$, for all $1 \leq j \leq N$. Suppose that $\phi : \Omega_1 \to \Omega_2$ is a biconformal map. It then automatically extends to a homeomorphism of  $\overline{\Omega_1}$ onto $\overline{\Omega_2}$. Suppose that $\phi(a_j) = b_j$ for all $j$. Then $\phi \in C^{1,\alpha}(\overline{\Omega})$ and $\phi'(z) \neq 0$ for $z\in \overline{\Omega}$.
\end{lemma}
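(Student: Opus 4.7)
The plan is to reduce the regularity claim to the classical Kellogg--Warschawski boundary regularity theorem by localizing on $\overline{\Omega_1}$ and, at each vertex, composing $\phi$ with explicit local straightening maps.

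The homeomorphism extension is immediate from Carath\'eodory's theorem, since the boundary of a curvilinear polygon is a Jordan curve made up of $C^{1,\alpha}$-smooth arcs meeting at the vertices. At any boundary point that is not a vertex, $\partial\Omega_1$ and $\partial\Omega_2$ are $C^{1,\alpha}$ near the point and its image, so Kellogg--Warschawski directly yields $\phi \in C^{1,\alpha}$ with $\phi' \neq 0$ in a relative neighborhood.

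The main step is at a matching pair of vertices $a_j, b_j$ with common interior angle $\theta_j$. After translating and rotating so that $a_j = b_j = 0$ and both wedges are in standard position, I would introduce local straightenings $\psi_k(z) = z^{\pi/\theta_j}$, with the branch chosen so that the interior wedge maps into the upper half-disk. The key claim is that $\psi_k$ carries $\Omega_k \cap U$ onto a Jordan domain whose boundary is H\"older-smooth through $0$: parametrizing the two boundary arcs at the vertex by arclength as $\gamma^{\pm}(s) = s\, e^{i\tau^{\pm}} + O(s^{1+\alpha})$ with $\tau^+ - \tau^- = \theta_j$, a direct expansion shows that $\psi_k \circ \gamma^+$ and $\psi_k \circ \gamma^-$ fit together with a common tangent at the origin into a single H\"older-smooth arc with nonzero tangent vector. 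The equality of the two angles is used essentially here: using the same exponent $\pi/\theta_j$ on both sides is what makes the leading-order terms align across $0$. Applying Kellogg--Warschawski to the biconformal map $\widetilde{\phi} = \psi_2 \circ \phi \circ \psi_1^{-1}$ between the straightened domains, and then unwinding the power maps, gives $\phi \in C^{1,\alpha}$ with $\phi'(a_j) \neq 0$ in a neighborhood of $a_j$.

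The main obstacle will be the careful arclength computation at the corners and the tracking of H\"older exponents through the two power transformations, which requires some bookkeeping: $\widetilde{\phi}$ will in general only enjoy a weaker H\"older regularity at $0$ (since $\pi/\theta_j > 1$ degrades the exponent), but this loss is precisely recovered when the outer straightening $\psi_2^{-1}$ is inverted to produce $\phi$. Once this local analysis is in place, patching the local conclusions over the finitely many vertices and the smooth part of the boundary yields the global statement.
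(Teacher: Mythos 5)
Your proposal is correct and is essentially the paper's own argument: Carath\'eodory for the boundary extension, the local Kellogg--Warschawski theorem away from the vertices, and at each matched pair of vertices a straightening by the power $z \mapsto z^{\pi/\theta_j}$, the equality of the two angles being exactly what makes the power factors cancel when you unwind; the paper merely organizes the vertex step differently, factoring $\phi = \psi_2 \circ \psi_1$ through the unit disk and deriving corner expansions of $\psi_1$, $\psi_1'$ and $\psi_2'$ separately, rather than conjugating $\phi$ by the two local straightenings at once and applying Kellogg--Warschawski a single time. One caveat you share with the paper's own bookkeeping: straightening a $C^{1,\alpha}$ corner of opening $\theta_j$ by $z^{\pi/\theta_j}$ produces a boundary arc that is in general only $C^{1,\alpha\theta_j/\pi}$, so the straightforward argument yields $\phi \in C^{1,\beta}$ with $\beta = \alpha \min_j \theta_j/\pi$ rather than the loss being ``precisely recovered''; this is harmless for the intended application, since Lemma \ref{lem:essequiv} only requires $\phi \in C^{1,b}$ for some $b > 0$.
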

\begin{proof}
By the classical Carath\'{e}odory theorem, $\phi$ extends to a homeomorphism $\phi : \overline{\Omega_1} \to \overline{\Omega_2}$. By our assumptions, we may factor $\psi = \psi_2 \circ \psi_1$, where $\psi_1 : \Omega_1 \to \D$ and $\psi_2 : \D \to \Omega_2$ are biconformal maps such that $\psi_1(a_j) = \psi_2^{-1}(b_j)$. 

Fix for the moment a vertex $a_j$ of $\partial \Omega_1$ and let $D_\varepsilon(\psi_1(a_j))$ be a sufficiently small closed disk of radius $\varepsilon$ centered at $\psi_1(a_j)$. For an appropriate branch of the power, it is easy to verify that the univalent map 
\begin{equation*}
\zeta(w) = (\psi_1^{-1}(w)-a_j)^{\pi/\theta_j}
\end{equation*}
on $\D$ then maps $\partial \D \cap D_\varepsilon(\psi_1(a_j))$ onto a $C^{1,\alpha}$-smooth curve. By a local version of the Kellog-Warschawski theorem, see Pommerenke \cite{Pomm02}, it follows that there exists an $\varepsilon$ such that $\zeta \in C^{1,\alpha}(\overline{\D} \cap D_\varepsilon(\psi_1(a_j)))$ and $\zeta'$ is zero-free close to $\psi_1(a_j)$. It follows that $\zeta^{-1}$ also is $C^{1,\alpha}$-smooth in some relatively open neighborhood $U_j = \zeta(\overline{\D}) \cap D_{\varepsilon'}(0)$ of $0$, 
\begin{equation}\label{eq:straighttoD}
\zeta^{-1}(z) = \psi_1(z^{\theta_j/\pi}+a_j) \in C^{1,\alpha}(U_j).
\end{equation} 
It follows that 
\begin{equation}\psi_1(z^{\theta_j/\pi}+a_j) = \psi_1(a_j) + zh_j(z),
\end{equation}
where $h_j \in C^{0,\alpha}(U_j)$ and $h_j$ is zero-free. This leads to 
\begin{equation} \label{eq:omega1toD}
\psi_1(z) = \psi_1(a_j) + (z-a_j)^{\pi/\theta_j}g_j(z)
\end{equation}
holding in a neighborhood $\overline{\Omega}_1 \cap D_{\varepsilon''}(a_j)$, where $g_j$ is $C^{0,\alpha}$-smooth, $g_j(a_j) \neq 0$. Similarly, by differentiating \eqref{eq:straighttoD} we obtain in a relatively open neighborhood of $a_j$ that
\begin{equation} \label{eq:omega1toDder}
(z-a_j)^{1-\pi/\theta_j}\psi'_1(z) = e_j(z),
\end{equation} 
$e_j$ being $C^{0,\alpha}$-smooth and $e_j(a_j) \neq 0$. Equations \eqref{eq:omega1toD} and \eqref{eq:omega1toDder} express sharpened statements of (Pommerenke \cite{Pomm92}, Theorem 3.9).

By arguing in the same manner for the map $\psi_2$ we find in a vicinity of $\psi_1(a_j)$ that
\begin{equation} \label{eq:DtoOmega2}
(w-\psi_1(a_j))^{1-\theta_j/\pi}\psi_2' (w) = f_j(w),
\end{equation}
where $f_j$ is $C^{0,\alpha}$-smooth and $f_j(\psi_1(a_j)) \neq 0$.

In combining \eqref{eq:omega1toD}, \eqref{eq:omega1toDder} and \eqref{eq:DtoOmega2} we discover that 
\begin{equation*}
\phi'(z) = \lambda e_j(z) g_j(z)^{\theta_j/\pi - 1} f_j(\psi_1(z)),
\end{equation*}
where $\lambda$ is some unimodular constant. We deduce that $\phi$ is $C^{1,\alpha}$-smooth in a relatively open neighborhood of each vertex $a_j$ in $\overline{\Omega_1}$ and $\phi'(a_j) \neq 0$. The validity of the same statement for each non-vertex point of $\partial \Omega_1$ follows immediately by the local Kellog-Warschawski theorem.
\end{proof}

\begin{lemma} \label{lem:essequiv}
Let $\Omega_1$ and $\Omega_2$ be $C^{1,\alpha}$-smooth curvilinear polygons, and let $\phi : \Omega_1 \to \Omega_2$ be a biconformal map that extends to a homeomorphism $\phi : \overline{\Omega_1} \to \overline{\Omega_2}$ such that $\phi \in C^{1,b}(\overline{\Omega_1})$ for some $0 < b < 1$ and $\phi'(w) \neq 0$ for $w \in \overline{\Omega_1}$. Then $\overline{T_{\Omega_2}}$ is unitarily equivalent to $\overline{T_{\Omega_1}} + K$ for a compact operator $K : L^2_a(\Omega_1) \to L^2_a(\Omega_1)$, linear over the field of reals.
\end{lemma}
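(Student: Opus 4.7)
The plan is to realize the unitary equivalence through the natural pullback by $\phi$, and then extract the compact remainder from a cancellation between the two Beurling-type kernels.

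First, define $U : L^2_a(\Omega_2) \to L^2_a(\Omega_1)$ by $(Uf)(w) = f(\phi(w))\phi'(w)$. Since $\phi$ is biholomorphic between bounded domains with $\phi'$ zero-free on $\overline{\Omega_1}$, the change of variables $dA(\zeta) = |\phi'(w)|^{2} \, dA(w)$ shows that $U$ is a $\C$-linear unitary, with inverse $(U^{-1}g)(z) = g(\phi^{-1}(z))/\phi'(\phi^{-1}(z))$. For $g \in L^2_a(\Omega_1)$ and $w \in \Omega_1$, the change of variables $\zeta = \phi(\eta)$, $z = \phi(w)$ in the principal value integral defining $T_{\Omega_2}$, together with $(U^{-1}g)(\phi(\eta)) = g(\eta)/\phi'(\eta)$, yields
\begin{equation*}
(U \overline{T_{\Omega_2}} U^{-1}g)(w) = \pv \frac{1}{\pi} \int_{\Omega_1} \frac{\phi'(w)\phi'(\eta)}{(\phi(\eta)-\phi(w))^{2}}\, \overline{g(\eta)} \, dA(\eta).
\end{equation*}
Hence the difference $K := U\overline{T_{\Omega_2}}U^{-1} - \overline{T_{\Omega_1}}$ is the $\R$-linear integral operator
\begin{equation*}
(Kg)(w) = \frac{1}{\pi}\int_{\Omega_1} k(w,\eta)\, \overline{g(\eta)} \, dA(\eta), \qquad k(w,\eta) = \frac{\phi'(w)\phi'(\eta)}{(\phi(\eta)-\phi(w))^{2}} - \frac{1}{(\eta-w)^{2}}.
\end{equation*}

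The crux is that $k$ is only \emph{weakly} singular on the diagonal. Setting $M(w,\eta) = (\phi(\eta)-\phi(w))/(\eta-w)$, which extends continuously to $\overline{\Omega_1} \times \overline{\Omega_1}$ with $M(w,w) = \phi'(w) \neq 0$, one may rewrite
\begin{equation*}
k(w,\eta) = \frac{1}{(\eta-w)^{2}} \left( \frac{\phi'(w)\phi'(\eta)}{M(w,\eta)^{2}} - 1 \right).
\end{equation*}
The $C^{1,b}$ hypothesis on $\phi$ yields $|\phi'(\eta)-\phi'(w)| + |M(w,\eta)-\phi'(w)| \lesssim |\eta-w|^{b}$ uniformly on $\overline{\Omega_1}$, leading to the bound $|k(w,\eta)| \lesssim |\eta-w|^{b-2}$. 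In particular the principal value is no longer needed, and since $k(\cdot,\eta)$ is meromorphic on $\Omega_1$ with at worst a removable singularity at $\eta$, it is in fact holomorphic there; this confirms that $K$ indeed maps $L^2_a(\Omega_1)$ into itself.

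To conclude, approximate $K$ by the operators $K_\varepsilon$ with truncated kernels $k(w,\eta)\chi_{|w-\eta| \ge \varepsilon}$. Each $K_\varepsilon$ has a bounded kernel on the bounded domain $\Omega_1$ and is therefore Hilbert-Schmidt. A Schur test using
\begin{equation*}
\sup_{w \in \Omega_1}\int_{|w-\eta|\le\varepsilon}|w-\eta|^{b-2}\,dA(\eta) \lesssim \varepsilon^{b}
\end{equation*}
(and its symmetric analogue) shows $\|K - K_\varepsilon\|_{L^2(\Omega_1)\to L^2(\Omega_1)} \lesssim \varepsilon^{b} \to 0$, so $K$ is compact on $L^2(\Omega_1)$ and therefore compact as an $\R$-linear operator on the closed subspace $L^2_a(\Omega_1)$. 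The main obstacle lies in establishing the diagonal cancellation cleanly: it is precisely the $C^{1,b}$ regularity that makes the two singular kernels differ by a weakly singular remainder, and some bookkeeping is required to justify that the change of variables in the principal value integral produces the claimed expression up to a vanishing symmetric error around the diagonal.
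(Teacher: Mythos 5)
Your argument is essentially sound and reaches the paper's conclusion, but it takes a slightly different technical route, and the one step you leave as ``bookkeeping'' is precisely the step the paper's proof is engineered to avoid. You conjugate by the same pullback unitary $Uf(w)=\phi'(w)f(\phi(w))$ and exploit the same cancellation: the kernel $k(w,\eta)=\phi'(w)\phi'(\eta)(\phi(\eta)-\phi(w))^{-2}-(\eta-w)^{-2}$ is $O(|\eta-w|^{b-2})$ (note that this uses, in addition to $\phi\in C^{1,b}(\overline{\Omega_1})$ and $|\phi'|\geq A>0$, the quasiconvexity of $\overline{\Omega_1}$ and the injectivity of $\phi$ on the compact closure to control $M$ away from the diagonal -- the paper's estimate has the same implicit content). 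Your compactness argument via truncation plus a Schur test is a correct, self-contained replacement for the paper's citation of the weakly singular kernel literature. Where you diverge is in how the identity $U\overline{T_{\Omega_2}}U^{-1}-\overline{T_{\Omega_1}}=K$ is established: you change variables directly inside the principal value area integral, so the excised neighborhoods on the $\Omega_1$ side are $\phi$-preimages of disks rather than disks centered at $w$, and since the individual kernels are \emph{strongly} singular, the equality of the two principal value prescriptions is not automatic and must be proved (e.g., for bounded $g$, the discrepancy is an integral of a kernel of size $\varepsilon^{-2}$ over a region of area $O(\varepsilon^{2+b})$ pinched between the distorted and round circles, hence $O(\varepsilon^{b})$, and one then extends by density and boundedness of all operators involved). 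The paper sidesteps this entirely: it applies Stokes' theorem to a polynomial $p$ to rewrite $T_{\Omega_2}p$ as a boundary integral, performs the change of variables there (no principal value issues on the boundary), identifies the difference kernel $K(\eta,w)=\phi'(w)(\phi(\eta)-\phi(w))^{-1}-(\eta-w)^{-1}$ with bounds $|K|\lesssim|\eta-w|^{b-1}$, $|\partial_\eta K|\lesssim|\eta-w|^{b-2}$, and applies Stokes again to land on the weakly singular area kernel $\partial_\eta K$. So: same structural idea and same conclusion, but you should either carry out the principal value comparison sketched above or adopt the boundary-integral detour; as written, that flagged step is the only real gap.
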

\begin{proof}
Let the unitary map $U : L^2_a(\Omega_2) \to L^2_a(\Omega_1)$ be given by $$Uf(w) = \phi'(w) f(\phi(w)).$$ Similarly, define $\overline{U} : \overline{L^2_a(\Omega_2)} \to \overline{L^2_a(\Omega_1)}$ by $\overline{U}g(w) = \overline{\phi'(w)} g(\phi(w))$. Stokes' theorem and the change of variables $\zeta = \phi(\eta)$ gives for a polynomial $p$ that 
\begin{align*}
T_{\Omega_2}p (z) &= \lim_{\varepsilon \to 0} \frac{i}{2\pi} \left[ \int_{\partial \Omega_2} \frac{p(\zeta)}{\bar{\zeta}-\bar{z}} \, d\zeta - \frac{1}{\varepsilon^2} \int_{|\zeta-z|=\varepsilon} p(\zeta)(\zeta-z) \, d\zeta \right] \\ &= \frac{i}{2\pi} \int_{\partial \Omega_2} \frac{p(\zeta)}{\bar{\zeta}-\bar{z}} \, d\zeta = \frac{i}{2\pi} \int_{\partial \Omega_1} \frac{Up(\eta)}{\overline{\phi(\eta)}-\overline{z}} \, d\eta.
\end{align*}
Hence 
\begin{equation*}
\overline{U}T_{\Omega_2}p(w) = \frac{i}{2\pi} \int_{\partial \Omega_1} \frac{Up(\eta) \overline{\phi'(w)}}{\overline{\phi(\eta)}-\overline{\phi(w)}} \, d\eta,
\end{equation*}
and therefore the difference satisfies
\begin{equation*}
(\overline{U}T_{\Omega_2} - T_{\Omega_1}U)p(w) = \frac{i}{2\pi} \int_{\partial \Omega_1} Up(\eta) \overline{K(\eta,w)} d\eta,
\end{equation*}
where
\begin{equation*}
K(\eta,w) = \frac{\phi'(w)}{\phi(\eta)-\phi(w)} - \frac{1}{\eta - w}.
\end{equation*}
Since $\phi \in C^{1,b}(\overline{\Omega_1})$ and $|\phi'| \geq A > 0$ it is straightforward to verify that
\begin{equation*}
|K(\eta,w)| \lesssim |\eta-w|^{b-1}, \quad |\partial_\eta K(\eta,w)| \lesssim |\eta-w|^{b-2}, \quad n,w \in \Omega_2, n\neq w.
\end{equation*} 
Hence, we may apply Stokes' theorem again to obtain 
\begin{equation*}
(\overline{U}T_{\Omega_2} - T_{\Omega_1}U)p(w) = \pv \frac{1}{\pi}\int_{\Omega_1} Up(\eta) \overline{ \partial_\eta K(\eta,w) } \, dA(\eta).
\end{equation*}
That is, the difference $\overline{U}T_{\Omega_2} - T_{\Omega_1}U$ is represented by a weakly singular kernel that is continuous off the diagonal, hence well known to be a compact operator (see e.g. \cite{Cobo90}), completing the proof.
\end{proof}
Lemmas \ref{lem:cornerlemma} and \ref{lem:essequiv} combine in an obvious way. To prove Theorem \ref{thm:essbound} we require one more lemma, showing that the domains we consider can be mapped conformally onto domains in $\mathcal{V}$ in a way that preserves vertices.
\begin{lemma}
Let $\Omega$ be a $C^{1,\alpha}$-smooth curvilinear polygon with $0 < \theta_j < \pi$ for $1 \leq j \leq N$ such that its angles satisfy $$\sum_{j=1}^{N-1}(\pi-\theta_j) + \pi + \theta_N \leq 2\pi,$$
possibly after a cyclic permutation of the vertex labels. Then there exists a curvilinear polygon $V \in \mathcal{V}$ and a biconformal map $\phi : \Omega \to V$ that extends to a $C^{1,\alpha}(\overline{\Omega})$-smooth homemorphism of $\overline{\Omega}$ onto $\overline{V}$, such that $\phi'(z) \neq 0$ for $z\in \overline{\Omega}$.
\end{lemma}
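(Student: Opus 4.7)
The plan is to build $\phi$ as a composition: first conformally map $\Omega$ onto the upper half-plane $\mathbb{H}$ with $a_N$ sent to the boundary point at infinity, and then conformally map $\mathbb{H}$ onto a suitable convex unbounded curvilinear polygon $V_0$ whose vertex angles match $\theta_1,\ldots,\theta_{N-1}$ at the corresponding prevertices and $\theta_N$ (in the positive convention) at the vertex at infinity. A final fractional linear transformation then brings $V_0$ to a bounded $V \in \mathcal{V}$, and Lemma \ref{lem:cornerlemma} delivers the desired $C^{1,\alpha}$-regularity with non-vanishing derivative.

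In detail, let $\phi_1 : \Omega \to \mathbb{H}$ be a Riemann mapping normalized so that $\phi_1(a_N) = \infty$. Since $\partial\Omega$ is a Jordan curve, Carath\'{e}odory's theorem extends $\phi_1$ to a homeomorphism of closures, and the remaining vertices $a_1, \ldots, a_{N-1}$ map to points $x_1 < x_2 < \cdots < x_{N-1}$ on $\R$. I would then construct a convex unbounded curvilinear polygon $V_0$ with piecewise $C^{1,\alpha}$-smooth boundary, with $N-1$ finite vertices and one vertex at infinity carrying interior angles $\theta_1,\ldots,\theta_{N-1}$ and $\theta_N$ respectively, in such a way that the Riemann map $\phi_2 : \mathbb{H} \to V_0$ fixing $\infty$ sends each $x_j$ to the vertex of $V_0$ with angle $\theta_j$. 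The hypothesis $\sum_{j=1}^{N-1}(\pi-\theta_j) + \pi + \theta_N \leq 2\pi$ is precisely the Gauss-Bonnet condition (with the angle at infinity treated in positive convention) required for a convex curvilinear polygon with this angle sequence to exist, the slack $2\pi - \sum_{j=1}^{N-1}(\pi-\theta_j) - \pi - \theta_N \geq 0$ being absorbed as nonnegative geodesic curvature distributed along its arcs.

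The main obstacle is the simultaneous matching of prevertices: one must exhibit $V_0$ so that the Riemann map $\phi_2 : \mathbb{H} \to V_0$ respects the prevertex labeling. This is a Schwarz-Christoffel parameter problem. When equality holds in the angle condition, one may take $V_0$ to be the straight-sided polygon produced by
\begin{equation*}
\phi_2(z) = C_1 + C_2 \int^z \prod_{j=1}^{N-1} (\zeta - x_j)^{\theta_j/\pi - 1}\, d\zeta,
\end{equation*}
which yields an unbounded convex polygon with exactly the prescribed angles at the prescribed prevertices. In the strict-inequality case one must additionally distribute the excess turning as positive geodesic curvature along one or more boundary arcs; this introduces extra continuous curvature parameters that, together with the polygon shape moduli, can be tuned by a standard continuity/degree argument on the $(N-3)$-dimensional moduli space of unbounded convex curvilinear polygons with fixed angle sequence.

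Having constructed $V_0$ and $\phi_2$, pick any point $p \in \C \setminus \overline{V_0}$ and set $L(z) = 1/(z-p)$. Then $V := L(V_0)$ is a bounded $C^{1,\alpha}$-smooth curvilinear polygon, a fractional linear image of a convex unbounded polygon, so $V \in \mathcal{V}$, and by conformal invariance of angles its vertex angles are exactly $\theta_1,\ldots,\theta_N$. The composition $\phi := L \circ \phi_2 \circ \phi_1 : \Omega \to V$ is a biconformal map carrying each $a_j$ to the correspondingly-angled vertex of $V$, so Lemma \ref{lem:cornerlemma} applies and yields $\phi \in C^{1,\alpha}(\overline{\Omega})$ with $\phi'(z) \neq 0$ for all $z \in \overline{\Omega}$.
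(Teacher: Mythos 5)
Your overall scaffolding (Riemann map to a canonical domain, a Schwarz--Christoffel-type map onto an unbounded convex polygon with vertex of angle $\theta_N$ at infinity, a final fractional linear map, then Lemma \ref{lem:cornerlemma}) matches the paper's, and in the equality case your explicit half-plane formula is essentially the paper's construction transplanted from the disk. The gap is in the strict-inequality case, which is the generic case the hypothesis is designed to cover: there you prescribe the target $V_0$ in advance and therefore must force the Riemann map $\phi_2:\mathbb{H}\to V_0$ to hit the given prevertices $x_1<\dots<x_{N-1}$, i.e.\ a genuine Schwarz--Christoffel parameter problem, which you dispose of with an unspecified ``continuity/degree argument'' on an alleged $(N-3)$-dimensional moduli space. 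That is not a proof: once curved sides (curvature densities) are allowed the relevant space is infinite dimensional, no properness or boundary behaviour of the prevertex map is established, and no degree computation is indicated. Moreover, even in the equality case you assert rather than verify that the Schwarz--Christoffel integral is univalent with convex image; univalence of such integrals is precisely the point that requires an argument.

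The paper sidesteps the parameter problem entirely by never fixing the target first. It takes $\psi_1:\Omega\to\D$, lets the prevertices be wherever $\psi_1$ puts them, and defines
\begin{equation*}
\psi_2(w)=\int_0^w (z-\psi_1(a_N))^{-(1+\theta_N/\pi)}\prod_{j=1}^{N-1}(z-\psi_1(a_j))^{\theta_j/\pi-1}\,dz,
\end{equation*}
the exponent $-(1+\theta_N/\pi)$ at $\psi_1(a_N)$ sending that prevertex to infinity with angle $\theta_N$ in the positive convention; the target is then \emph{defined} as $V=\psi_3(\psi_2(\D))$ with $\psi_3(z)=1/(z-z_0)$, $z_0\notin\overline{\psi_2(\D)}$. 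The only thing to check is univalence and convexity of $\psi_2$, and this is exactly where the angle hypothesis enters: since $w\mapsto w/(w-z)$ maps $\D$ onto $\{\Re\xi<1/2\}$ for $z\in\partial\D$, the inequality $\sum_{j=1}^{N-1}(1-\theta_j/\pi)+1+\theta_N/\pi\le 2$ yields $\Re\bigl(1+w\,\psi_2''(w)/\psi_2'(w)\bigr)>0$ on $\D$, so $\psi_2$ is univalent with convex range; the ``excess turning'' you wanted to distribute by hand is absorbed automatically by this criterion. To repair your half-plane version you would similarly have to define $V_0$ as the image of an explicit integral built at the actual prevertices (with the extra factor or curvature density included in the integrand) and then prove a convexity/univalence criterion; as written, both the prevertex matching in the strict case and the univalence claim are unsupported.
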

\begin{proof}
Let $\psi_1 : \Omega \to \D$ be a biconformal map. By the classical Carath\'{e}odory theorem, it extends to a homeomorphism $\psi_1 : \overline{\Omega} \to \overline{\D}$. Let $\psi_2$ be the map defined on $\D$ by
\begin{equation*}
\psi_2(w) = \int_0^w (z-\psi_1(a_N))^{-(1+\theta_N/\pi)}\Pi_{j=1}^{N-1}(z- \psi_1(a_j))^{\theta_j/\pi -1} \, dz,
\end{equation*}
inspired by the Schwarz-Christoffel transformations. We will soon verify that  $\psi_2$ is univalent on $\D$ with convex range. Based on this, one easily checks that $\partial \psi_2(\D) \subset \C^* = \C \cup \{\infty\}$ is an unbounded piecewise real analytic Jordan curve having exactly $N$ vertices, such that each vertex is given by $\psi_2(\psi_1(a_j))$ for some $j$ and the interior angle made there is equal to $\theta_j$. Note in particular that $\psi_2(\psi_1(a_N)) = \infty$ and that $\psi_2(\D) \in \mathcal{V}$. To obtain a bounded curvilinear polygon $V \in \mathcal{V}$, let $V = \psi_3(\psi_2(\D))$, where $\psi_3(z) = \frac{1}{z-z_0}$ for a point $z_0 \notin \overline{\psi_2(\D)}$. To finish the lemma, let $\phi = \psi_3 \circ \psi_2 \circ \psi_1$ and apply Lemma \ref{lem:cornerlemma}.

To check the convexity of $\psi_2$, we mimic Johnston \cite{John83} and note that
\begin{multline*}
\Re \left( 1 + w\frac{\psi_2''(w)}{\psi_2'(w)} \right) = \\ 1 - \Re\left( (1+\theta_N/\pi)\frac{w}{w-\psi_1(a_N)} + \sum_{j=1}^{N-1} (1-\theta_j/\pi)\frac{w}{w-\psi_1(a_j)}\right).
\end{multline*}
Since $w \mapsto \xi = \frac{w}{w-z}$ maps $\D$ biconformally onto $\Re \xi < 1/2$ for $z \in \partial \D$, it follows by our assumptions on the angles that $\Re \left( 1 + w\frac{\psi_2''(w)}{\psi_2'(w)} \right) > 0$ for $w \in \D$. It is well known that this condition implies that $\phi_2$ is univalent with convex range.
\end{proof}

In combining the previous three lemmas we finally obtain our desired result.
\begin{theorem} \label{thm:essbound}
Let $\Omega$ be a $C^{1,\alpha}$-smooth curvilinear polygon with $0 < \theta_j < \pi$ for $1 \leq j \leq N$ such that its angles satisfy $$\sum_{j=1}^{N-1}(\pi-\theta_j) + \pi + \theta_N \leq 2\pi,$$
possibly after a cyclic permutation of the vertex labels. Then
\begin{equation*}
|\sigma_\textrm{ess}(K)| = |\sigma_\textrm{ess} (\overline{T_\Omega})| \leq \max_{1 \leq j \leq N} \left( 1-\theta_j/\pi \right),
\end{equation*}
where the Neumann-Poincar\'{e} operator $K$ is acting on $H^{1/2}(\partial \Omega)$.
\end{theorem}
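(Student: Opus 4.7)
The plan is to combine the three preceding lemmas with Krushkal's theorem, exploiting stability of the essential spectrum under unitary equivalence and compact perturbations.

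First I would invoke the preceding (unlabeled) lemma to produce a curvilinear polygon $V \in \mathcal{V}$ together with a biconformal map $\phi : \Omega \to V$ that extends to a $C^{1,\alpha}(\overline{\Omega})$-smooth homeomorphism with $\phi'$ non-vanishing on $\overline{\Omega}$, matching vertices to vertices with identical angles $\theta_j$. Second, Lemma \ref{lem:essequiv} then upgrades this conformal equivalence to the operator-theoretic statement that $\overline{T_V}$ is unitarily equivalent to $\overline{T_\Omega} + K$ for some real-linear compact operator $K$ on $L^2_a(\Omega)$. Since the essential spectrum is preserved by unitary equivalence and by compact perturbations, this gives
\begin{equation*}
\sigma_{\textrm{ess}}(\overline{T_\Omega}) = \sigma_{\textrm{ess}}(\overline{T_V}).
\end{equation*}

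Third, because $V \in \mathcal{V}$, the very definition of that class (together with Theorem \ref{thm:krushkal} and Corollary \ref{cor:symmetry} to treat the vertex sent to infinity under the fractional linear normalization) yields
\begin{equation*}
|\sigma(\overline{T_V})| = \max_{1 \leq j \leq N} (1-\theta_j/\pi).
\end{equation*}
Since $\sigma_{\textrm{ess}}(\overline{T_V}) \subset \sigma(\overline{T_V})$, the same number bounds $|\sigma_{\textrm{ess}}(\overline{T_V})|$ and hence, by the equality above, $|\sigma_{\textrm{ess}}(\overline{T_\Omega})|$. Finally, the identification $|\sigma_{\textrm{ess}}(K)| = |\sigma_{\textrm{ess}}(\overline{T_\Omega})|$ is immediate from the similarity furnished by Theorem \ref{thm:beurling} together with Corollary \ref{cor:spectrum}, completing the argument.

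The real analytic content is already absorbed by the preceding machinery: on one side, the construction of the Schwarz-Christoffel type map realizing $\Omega$ as a corner-preserving conformal deformation of a domain in $\mathcal{V}$ — the angle hypothesis $\sum_{j=1}^{N-1}(\pi-\theta_j) + \pi + \theta_N \leq 2\pi$ is exactly what guarantees convexity of the target so that Theorem \ref{thm:krushkal} applies — and on the other side, the kernel estimate showing that $\overline{U}T_V - T_\Omega U$ is given by a weakly singular, hence compact, integral operator. Given these ingredients, Theorem \ref{thm:essbound} itself is a formal assembly, with no further analytic obstacle.
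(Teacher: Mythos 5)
Your proposal is correct and follows essentially the same route as the paper, which proves Theorem \ref{thm:essbound} precisely by combining the vertex-preserving conformal map onto a domain $V \in \mathcal{V}$ from the preceding lemma with Lemma \ref{lem:essequiv} (unitary equivalence up to a real-linear compact perturbation, hence equality of essential spectra) and the value $|\sigma(\overline{T_V})| = \max_j(1-\theta_j/\pi)$ supplied by Theorem \ref{thm:krushkal}, the identification $|\sigma_\textrm{ess}(K)| = |\sigma_\textrm{ess}(\overline{T_\Omega})|$ coming from Theorem \ref{thm:beurling} and Corollary \ref{cor:spectrum}. No gaps beyond what the paper itself leaves implicit.
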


As a finishing remark, we conjecture, partly based on numerical evidence, that $|\sigma_\textrm{ess} (\overline{T_\Omega})|$ only depends on the interior angles of a curvilinear polygon $\Omega$, and not on its shape. On the other hand, for the full spectrum, Werner \cite{Wern97} proves that $|\sigma (\overline{T_R})| = 1/2$ for rectangles $R$ sufficiently close to a square, but that $|\sigma (\overline{T_R})| > 1/2$ when the ratio between the side lengths of $R$ exceeds $2.76$, suggesting the formation of eigenvalues outside the essential spectrum as a rectangle is elongated.

\section*{Acknowledgements}
We thank Professors Alexandru Aleman, Johan Helsing and Jan-Fredrik Olsen for very helpful discussions. The work of the first author was partially supported by The Royal Physiographic Society in Lund. The work of the second author was partially supported by the National Science Foundation Grant DMS-10-01071.


\begin{thebibliography}{00}

\bibitem{Ahl} L. V. Ahlfors, `Remarks on the Neumann-Poincar\'e integral equation', Pacific J. Math. \textbf{3} (1952), 271--280.

\bibitem{Ammari} H. Ammari, H. Kang, H. Lee,  `Layer Potential Techniques in Spectral Analysis', Mathematical Surveys and Monographs, Volume \textbf{153}, American Mathematical Society, Providence, 2009.

\bibitem{BS} S. Bergman, M. Schiffer, `Kernel functions and conformal mapping', Composition Math. \textbf{8} (1951), 205--249.

\bibitem{Brem12}
J. Bremer, `A fast direct solver for the integral equations of scattering theory on planar curves with corners', J. Comput. Phys., \textbf{231} (2012), 1879--1899.

\bibitem{Carleman} T. Carleman, `\"Uber das Neumann-Poincar\'esche Problem f\"ur ein Gebiet mit Ecken', Almqvist and Wiksels, Uppsala, 1916.

\bibitem{Chang08}
 T. Chang, K. Lee, `Spectral properties of the layer potentials on Lipschitz domains', Illinois J. Math. \textbf{52} (2008), 463--472.

\bibitem{Cobo90}
F. Cobos, T. K\"{u}hn, `Eigenvalues of weakly singular integral operators', J. London Math. Soc. (2) \textbf{41} (1990), 323--335. 

\bibitem{Coif82}
 R. R. Coifman, A. McIntosh, Y. Meyer, `L'int\'{e}grale de Cauchy d\'{e}finit un op\'{e}rateur born\'{e} sur L2 pour les courbes lipschitziennes', Ann. of Math. (2) \textbf{116} (1982), 361--387.

\bibitem{Hels12}
J. Helsing, K-M. Perfekt, `On the polarizability and capacitance of the cube', Appl. Comput. Harmon. Anal. (2012), In press, http://dx.doi.org/10.1016/j.acha.2012.07.006.

\bibitem{John83}
E. Johnston,
`A "counterexample'' for the Schwarz-Christoffel transform',
Amer. Math. Monthly \textbf{90} (1983),  701--703. 

\bibitem{Khav91} D. Khavinson, M. Putinar, H. S. Shapiro,
  `Poincar{\'e}'s variational problem in potential theory', Arch.
  Ration. Mech. Anal.  \textbf{185} (2007),  143--184.

\bibitem{Krus09}
 S. Krushkal, `Fredholm eigenvalues of Jordan curves: geometric, variational and computational aspects', Analysis and mathematical physics, 349--368, Trends Math., Birkh\"{a}user, Basel, 2009.

\bibitem{Kuhn88}
R. K\"{u}hnau, `M\"{o}glichst konforme Spiegelung an einer Jordankurve', Jahresber. Deutsch. Math.-Verein. \textbf{90} (1988), 90--109.

\bibitem{Licht} L. Lichtenstein, `Neuere Entwicklung der Potentialotheorie. Konforme Abbildung', Encyklop. d. math.
Wissensch. II C 3, Teubner, Leipzig, 1910.

\bibitem{Mikh08} S. E. Mikhailov,
`About traces, extensions, and co-normal derivative operators on Lipschitz domains', Integral methods in science and engineering, 149--160, Birkh\"{a}user Boston, Boston, MA, 2008. 

\bibitem{Mitrea} I. Mitrea, `On the spectra of elastostatic and hydrostatic layer potentials on curvilinear polygons', J. Fourier Anal. Appl. \textbf{8} (2002), 443--487.

\bibitem{Pomm92}
 Ch. Pommerenke, `Boundary behaviour of conformal maps', Grundlehren der Mathematischen Wissenschaften \textbf{299}, Springer-Verlag, Berlin, 1992.

\bibitem{Pomm02}
Ch. Pommerenke, `Conformal maps at the boundary', in: Handbook of complex analysis: geometric function theory, Vol. \textbf{1}, 37--74, North-Holland, Amsterdam, 2002. 

\bibitem{Schob73} 
 G. Schober, `Estimates for Fredholm eigenvalues based on quasiconformal mapping', \textit{Numerische, insbesondere approximationstheoretische Behandlung von Funktionalgleichungen (Tagung, Math. Forschungsinst., Oberwolfach, 1972)}, pp. 211--217. Lecture Notes in Math., Vol. \textbf{333}, Springer, Berlin, 1973.

\bibitem{Verc84}
 G. Verchota, `Layer potentials and regularity for the Dirichlet problem for Laplace's equation in Lipschitz domains', J. Funct. Anal. \textbf{59} (1984), 572--611.

\bibitem{Wern97}
S. Werner,
`Spiegelungskoeffizient und Fredholmscher Eigenwert f\"{u}r gewisse Polygone',
Ann. Acad. Sci. Fenn. Math. \textbf{22} (1997), 165--186. 
\end{thebibliography}
\end{document}